\numberwithin{equation}{section}
\renewcommand{\P}{ \mathbb{P} }
\newcommand{\Z}{ \mathbb{Z} }
\newcommand{\N}{ \mathbb{N} }
\newcommand{\A}{ \mathbb{A} }
\theoremstyle{plain}
	\newtheorem{theorem}{Theorem}
		\numberwithin{theorem}{section}
	\newtheorem{proposition}[theorem]{Proposition}
	\newtheorem{corollary}[theorem]{Corollary}
	\newtheorem{conjecture}[theorem]{Conjecture}
	\newtheorem*{theorem*}{Theorem}
	\newtheorem*{lemma*}{Lemma}
	\newtheorem*{prop*}{Proposition}
	\newtheorem*{cor*}{Corollary}
	\newtheorem*{conj*}{Conjecture}
\theoremstyle{definition}
	\newtheorem{example}[theorem]{Example}
	\newtheorem*{example*}{Example}
	\newtheorem{definition}[theorem]{Definition}
	\newtheorem{remark}[theorem]{Remark}
\title[The Zariski dense orbit conjecture]{A conjecture strengthening the Zariski dense orbit problem for birational maps of dynamical degree one}
\author{Jason Bell}
\address{University of Waterloo \\
Department of Pure Mathematics \\
Waterloo, Ontario \\
N2L 3G1, Canada}
\email{jpbell@uwaterloo.ca}
\author{Dragos Ghioca}
\address{University of British Columbia\\
Department of Mathematics\\
Vancouver, BC\\
V6T 1Z2, Canada}
\email{dghioca@math.ubc.ca}
\thanks{The authors were partially supported by Discovery Grants from the National Science and Engineering Research Council of Canada.}
\subjclass[2010]{Primary: 11G10, 14K12, 37P55; Secondary: 16S38}
\keywords{Semiabelian varieties, endomorphisms, dynamical degree, dense orbits, Dixmier-Moeglin equivalence}
\begin{document}

\begin{abstract}
We formulate a strengthening of the Zariski dense orbit conjecture for birational maps of dynamical degree one. So, given a quasiprojective variety $X$ defined over an algebraically closed field $K$ of characteristic $0$, endowed with a birational self-map $\phi$ of dynamical degree $1$, we expect that  either there exists a non-constant rational function $f:X\dashrightarrow \P^1$ such that $f\circ \phi=f$, or there exists a proper subvariety $Y\subset X$ with the property that for any invariant proper subvariety $Z\subset X$, we have that $Z\subseteq Y$. We prove our conjecture for automorphisms $\phi$ of dynamical degree $1$ of semiabelian varieties $X$. Also, we prove a related result for regular dominant self-maps $\phi$ of semiabelian varieties $X$: assuming $\phi$ does not preserve a non-constant rational function, we have that the dynamical degree of $\phi$ is larger than $1$ if and only if the union of all $\phi$-invariant proper subvarieties of $X$ is Zariski dense. We give applications of our results to representation theoretic questions
about twisted homogeneous coordinate rings associated to abelian varieties.
\end{abstract}

\maketitle


\section{Introduction}
\label{sec:intro}


\subsection{The Zariski dense orbit conjecture}

The following conjecture was advanced by Medvedev-Scanlon \cite{M-S} and Amerik-Campana \cite{A-C} and was originally inspired by a conjecture of Zhang \cite{Zhang}.

\begin{conjecture}
\label{conj:original}
Let $X$ be a quasiprojective variety defined over an algebraically closed field $K$ of characteristic $0$, endowed with a rational dominant self-map $\phi$. Then exactly one of the following two conditions must hold:
\begin{enumerate}
\item[(I)] either there exists a non-constant rational function $f:X\dashrightarrow \P^1$ such that $f\circ \phi = f$ (which is referred to as $\phi$ preserving a non-constant rational function or fibration), 
\item[(II)] or there exists a point $x\in X(K)$ whose orbit under $\phi$ is well-defined (i.e., for each $n\ge 0$, the $n$-th iterate $\phi^n(x)$ lies outside the indeterminacy locus of $\phi$) and Zariski dense in $X$.
\end{enumerate}
\end{conjecture}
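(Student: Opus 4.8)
The plan is to treat the two assertions of Conjecture~\ref{conj:original} separately: first the \emph{mutual exclusivity} of (I) and (II), then the substantive \emph{dichotomy} that at least one of them always holds.

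Mutual exclusivity is the easy half. Suppose (I) holds, say $f\circ\phi=f$ with $f\colon X\dashrightarrow\P^1$ non-constant, and suppose $x\in X(K)$ has a well-defined orbit. After replacing $X$ by the dense open subset on which $f$ is a morphism (which does not affect the existence of an invariant fibration, and only shrinks the indeterminacy issues), we get $f(\phi^n(x))=f(x)$ for all $n\ge 0$, so the entire orbit lies in the fibre $f^{-1}(f(x))$, a proper closed subset of $X$. Hence no orbit is Zariski dense and (II) fails; conversely if some orbit is dense then no such $f$ can exist. So (I) and (II) are never simultaneously true.

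For the dichotomy, assume (I) fails; I must produce a point with well-defined, Zariski-dense forward orbit. First I would reduce to a good integral model: all the data ($X$, $\phi$, its indeterminacy locus, finitely many defining ideals) live over a finitely generated subfield of $K$, so spreading out yields a finite-type $\Z$-algebra $R$, a scheme $\mathcal{X}/R$, and a self-map $\Phi$ of $\mathcal{X}$ restricting to $\phi$ on the generic fibre. Choosing a maximal ideal of $R$ of residue characteristic $p$ at which $\Phi$ has good reduction, the special fibre becomes a dominant rational self-map over a finite field. The classical attack, following Amerik's theorem on the existence of non-preperiodic points and the Medvedev--Scanlon analysis, then runs: (i) lift a suitable periodic point mod $p$ to a $p$-adic periodic point of some iterate $\phi^k$ at which the differential is contractive, and use $p$-adic linearization to produce an open $p$-adic ball $B$ all of whose forward $\phi^k$-iterates stay inside the domain of $\phi$; (ii) deduce that every point of $B$ has a well-defined full forward orbit; (iii) use that the $p$-adic field, hence $B$, is uncountable to pick $x\in B$ avoiding the countably many proper $\phi$-periodic subvarieties of $X$ and the images of all iterated indeterminacy loci, and conclude that the Zariski closure of $\{\phi^n(x):n\ge 0\}$ is all of $X$.

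The main obstacle is precisely step (iii): ruling out that the forward orbit remains trapped in a proper \emph{invariant} subvariety. Even granting a well-defined orbit from the $p$-adic construction, one must control the family of $\phi$-periodic subvarieties of $X$; when (I) fails there is no invariant fibration to organize that family, and a priori it need not lie inside a single proper closed set — an infinite strictly ascending chain of reducible periodic subvarieties could obstruct every candidate point, so Noetherianity does not obviously help. Getting past this requires either a strong structural finiteness statement on periodic subvarieties (available, for instance, when $X$ is a semiabelian variety via the Mordell--Lang circle of ideas, which is the setting the present paper exploits) or a genericity argument showing the $p$-adically constructed point can be chosen to escape all of them. I expect essentially all the difficulty of the conjecture, and the point at which extra hypotheses on $X$ become indispensable, to be concentrated here; in the stated generality the dichotomy remains open, and any complete proof will have to supply exactly this missing control.
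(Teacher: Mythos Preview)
The statement you are attempting is Conjecture~\ref{conj:original}, which the paper presents as an \emph{open conjecture}, not a theorem. The paper offers no proof; it only remarks that mutual exclusivity of (I) and (II) is easy and then surveys the special cases in which the dichotomy is known. There is therefore no ``paper's own proof'' against which to compare your proposal. Your treatment of mutual exclusivity is essentially correct and matches the paper's one-line remark. (A minor technical point: you cannot simply replace $X$ by the domain of $f$, since the orbit need not remain there. The clean argument is that $f\circ\phi^n=f$ as rational functions for every $n$, so whenever $\phi^n(x)$ lies in the domain $U$ of $f$ one has $f(\phi^n(x))=c$ for a fixed $c$; hence the entire orbit is contained in $\overline{f^{-1}(c)}\cup(X\setminus U)$, a proper closed subset.)

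For the dichotomy you do not actually give a proof, and you say so yourself: you identify step~(iii) as ``the main obstacle'' and conclude that ``in the stated generality the dichotomy remains open.'' That assessment is accurate, so the gap here is not an error on your part but simply the fact that the conjecture is open. Your sketch loosely follows the Amerik and Bell--Ghioca--Reichstein line of argument (spreading out, $p$-adic neighbourhoods, avoiding countably many bad loci), which indeed proves Conjecture~\ref{conj:original} over \emph{uncountable} base fields but does not settle the general case. The paper makes no claim to prove the conjecture in full either; its contributions (Theorems~\ref{thm:main} and~\ref{thm:main_2}) concern only regular self-maps of semiabelian varieties and proceed by entirely different, structural methods---the minimal polynomial of the group-endomorphism part, unipotent reductions, and the results of Reichstein--Rogalski--Zhang and Pink--Roessler---rather than by any $p$-adic or spreading-out argument.
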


It is easy to see that conditions (I) and (II) are mutually exclusive; the difficulty lies in proving that in the absence of condition~(I), one can always find a point with a Zariski dense orbit as in~(II). Various cases of the above conjecture are known:
\begin{itemize}
\item when $K$ is uncountable (see \cite{A-C} and also \cite{BGZ}); 
\item when $X=\A^N$ and $\phi$ is the coordinatewise action of one-variable polynomials (see \cite{M-S});
\item when $X$ is a semiabelian variety and $\phi$ is a regular dominant self-map (see \cite{G-Matt} and also, see \cite{G-S, G-Sina} when $X$ is an abelian variety);
\item when $X$ is a commutative, linear algebraic group and $\phi$ is a group  endomorphism (see \cite{G-H} and also, see \cite{G-X});
\item when $X$ is a surface (see \cite{X}), and also for certain $3$-folds and higher dimensional varieties $X$ (see \cite{BGSZ}).
\end{itemize}

It is worth pointing out that when the field $K$ has positive characteristic, one would need to amend the statement of Conjecture~\ref{conj:original} (see \cite[Example~6.2]{BGZ} and especially, \cite[Conjecture~1.3]{G-Sina-20}).


\subsection{A strengthening of the conjecture in the case of birational maps of dynamical degree one}

We believe there is a stronger form of Conjecture~\ref{conj:original} when $\phi$ is a birational map of dynamical degree $1$. We recall that the \emph{dynamical degree} $\lambda_1(\phi)$ of a rational self-map $\phi$ of a projective smooth  variety $X$ of dimension $d$ is defined as
$$\lambda_1(\phi):=\lim_{n\to\infty} \left(\left(\phi^n\right)^*\mathcal{L}\cdot \mathcal{L}^{d-1}\right)^{\frac{1}{n}},$$
where $\mathcal{L}$ is an ample line bundle on $X$. The above limit exists and it is independent of the choice of the ample divisor $\mathcal{L}$;  
for more properties regarding the dynamical degree for self-maps of projective varieties along with its connections to the arithmetic degree, we refer the reader to \cite{Annali} and the references therein. 

Now, before stating our main conjecture, we need the following definition.
\begin{definition}
\label{def:invariant}
Let $\phi:X\dashrightarrow  X$ be a dominant rational self-map. A  subvariety $Y\subset X$ (not necessarily irreducible) is called invariant under $\phi$ if the restriction $\phi|_Y$ induces a dominant rational self-map of $Y$.
\end{definition}

\begin{conjecture}
\label{conj:strong}
Let $X$ be a smooth projective variety defined over an algebraically closed field $K$ of characteristic $0$ and let $\phi:X\dashrightarrow X$ be a birational self-map of dynamical degree $1$. Then exactly one of the following two statements must hold:
\begin{enumerate}
\item[(i)] either there exists a non-constant rational function $f:X\dashrightarrow \P^1$ such that $f\circ \phi=f$; 
\item[(ii)] or there exists a proper subvariety $Y\subset X$ with the property that each proper invariant subvariety $Z\subset X$ must be contained in $Y$.
\end{enumerate} 
\end{conjecture}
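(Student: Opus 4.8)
The statement asserts both that (i) and (ii) are incompatible and that at least one of them always holds. Incompatibility is the routine half: if $f\circ\phi=f$ for a non-constant $f:X\dashrightarrow\P^1$, then $f=f\circ\phi^{-1}$ as well, so for every $c$ in the image of $f$ the fiber $f^{-1}(c)$ is a proper invariant subvariety in the sense of Definition~\ref{def:invariant} (we are allowed to take it reducible), and these fibers sweep out a Zariski dense subset of $X$, so no single proper $Y$ can contain all of them. Hence the content is to construct the subvariety $Y$ of (ii) under the assumption that $\phi$ preserves no non-constant rational function. The heuristic guiding the plan is that (i) holds as soon as $\phi$ acts with finite order on the base of some $\phi$-equivariant dominant rational map out of $X$ with positive-dimensional target: a suitable symmetric function of the iterates of any non-constant rational function on such a base descends to a $\phi$-invariant function on $X$. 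So the failure of (i) should force the dynamics of $\phi$ to be ``non-torsion everywhere'', and the aim is to prove that under this constraint the proper invariant subvarieties cannot proliferate but must all sit inside one distinguished proper subvariety.

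\emph{Step 1: reduce to automorphisms.} I would first pass to an algebraically stable birational model and then invoke the structure theory of birational maps of dynamical degree $1$. On surfaces this is the Diller--Favre classification: $\phi$ is either birationally conjugate to an automorphism, or it preserves a rational or an elliptic fibration, and in the latter cases (i) already holds. In higher dimension the analogous dichotomy (a birational map of dynamical degree $1$ is birationally conjugate to an automorphism of a good model unless it preserves a fibration) is only partially established, and securing enough of it is the first serious external input; from here on I would treat the automorphism case as the heart of the matter.

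\emph{Step 2: peel off canonical invariant fibrations.} With $\phi\in\mathrm{Aut}(X)$ and $\lambda_1(\phi)=1$, use the canonical $\phi$-equivariant maps attached to $X$ --- the Albanese morphism, the maximal rationally connected fibration, and, when $\kappa(X)\ge 1$, the Iitaka fibration. On each, $\phi$ induces an automorphism $\bar\phi$ of a lower-dimensional base. If one of these bases has positive dimension and $\bar\phi$ has finite order on it, then (i) holds by the averaging remark above; otherwise $\bar\phi$ is a non-torsion automorphism of dynamical degree $1$ on a smaller variety and one argues by induction on dimension, the delicate point being to lift the subvariety produced for $\bar\phi$ back to one for $\phi$ (away from the preimage of that locus a fiber can be invariant only if $\bar\phi$ fixes its image, which it does not). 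Using in addition that an automorphism of an abelian variety of dynamical degree $1$ acts invertibly on the integral homology with spectral radius $1$, forcing all its eigenvalues onto the unit circle and hence, by Kronecker's theorem, to be roots of unity --- so that a power is a translation --- this step reduces everything to the case that $X$ is rationally connected with $\kappa(X)=-\infty$.

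\emph{Step 3: the rationally connected case --- the main obstacle.} If $\mathrm{Aut}^0(X)$ is finite, then $\phi^*$ is quasi-unipotent on the N\'eron--Severi group (its spectral radius equals $\lambda_1(\phi)=1$ and it preserves an integral lattice), so the numerical classes of $\phi$-invariant subvarieties lie in a fixed finite set; one then analyses the induced action of $\phi$ on the corresponding Chow or Hilbert schemes, showing that either the invariant members form a positive-dimensional family on which $\phi$ acts trivially --- whence an invariant fibration and (i) --- or else they are contained in a proper subvariety, giving (ii). The paradigm is $X=\P^n$ with $\phi\in\mathrm{PGL}_{n+1}$, where the dichotomy comes down to the elementary fact that $\phi$ either has a ratio of two eigenvalues equal to a root of unity, producing an invariant pencil and hence (i), or else every invariant subvariety lies in the union of the coordinate hyperplanes of an eigenbasis, giving (ii); equivalently, (i) holds precisely when the Zariski closure of $\langle\phi\rangle$ has no dense orbit on $\P^n$. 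When $\mathrm{Aut}^0(X)$ is positive-dimensional, $\phi$ normalizes a connected algebraic subgroup $G\le\mathrm{Aut}(X)$, and one must control the action of $G\rtimes\langle\phi\rangle$ on $X$ --- that is, understand rationally connected varieties with large automorphism group and the interplay between the $G$-orbit stratification and the conjugation action of $\phi$ on $G$ --- and it is here that genuinely new ideas are required; even the toric subcase does not seem to follow from what is currently known. The one instance of the conjecture that can be settled unconditionally at present, and which is proved in this paper, is exactly the non-uniruled extreme of Step 2 in its purest form, namely $X$ a semiabelian variety and $\phi$ an automorphism with $\lambda_1(\phi)=1$: there the structure of semiabelian varieties, a Mordell--Lang analysis of the invariant subvarieties, and the relation between $\lambda_1(\phi)$ and the eigenvalues of $\phi$ on the tangent space take the place of the missing higher-dimensional inputs above, and this should be viewed as the principal evidence for the conjecture in general.
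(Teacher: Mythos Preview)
The statement you are addressing is Conjecture~\ref{conj:strong}, which the paper presents as an open conjecture; there is no proof of it in the paper to compare against. Your write-up is candid about this: you describe Step~1 as depending on a higher-dimensional Diller--Favre dichotomy that is ``only partially established'', and Step~3 as the place where ``genuinely new ideas are required''; and you correctly identify the one case the paper does settle unconditionally --- automorphisms of semiabelian varieties with $\lambda_1(\phi)=1$ (Theorem~\ref{thm:main}) --- as the principal evidence. So what you have written is a program, not a proof, and you already know it.

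The one piece that \emph{is} a proof is your incompatibility argument (i) $\Rightarrow$ $\lnot$(ii), and it is correct: the generic fibres of a non-constant $\phi$-invariant $f$ are proper invariant subvarieties whose union is dense, so no single proper $Y$ can contain them all. The paper does not spell this direction out for Conjecture~\ref{conj:strong}, though it follows from the remark after the conjecture (that (ii) yields a Zariski dense orbit) together with the mutual exclusivity noted for Conjecture~\ref{conj:original}. For context, the paper's positive results towards the conjecture are Theorem~\ref{thm:main} (semiabelian $X$), Theorem~\ref{thm:Xie} (surfaces, via Cantat, Diller--Favre, Xie), and the observation in \S1.3 for $\phi\in\mathrm{Aut}^0(X)$; your outline touches each of these ingredients in the right places, but neither it nor the paper offers a path to the general rationally connected case in your Step~3.
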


It is easy to see that condition~(ii) from Conjecture~\ref{conj:strong} implies the weaker condition~(II) from Conjecture~\ref{conj:original} (at least in the case $\phi$ is a regular morphism) since one can  choose a point $x\in (X\setminus Y)(K)$  and thus its orbit $\mathcal{O}_\phi(x)$ must be Zariski dense in $X$ because otherwise its Zariski closure would need to be contained in $Y$, which would be a contradiction. Also, we note below the following simple example which shows that oftentimes the subvariety $Y$ from condition~(ii) above is a nontrivial proper subvariety of $X$.

\begin{example}
\label{ex}
Let $\phi:\P^2\longrightarrow \P^2$ be given by $\phi\left([x:y:z]\right)=[2x:3y:z]$; 
then clearly there is no invariant fibration for $\phi$ since most points would have a Zariski dense orbit under $\phi$ (for example, the orbit of $[1:1:1]$ consists of all points of the form $\left[2^n:3^n:1\right]$, for $n\ge 0$). However, there exists a (nontrivial) proper subvariety $Y\subset \P^2$ containing all the proper $\phi$-invariant subvarieties of $X$; indeed, $Y$ consists of $3$ lines, as it is given by the equation $xyz=0$ in $\P^2$.
\end{example}

Furthermore, we believe an even stronger statement would hold, as follows.
\begin{conjecture}
\label{conj:iff}
Let $X$ be a smooth projective variety defined over an algebraically closed field $K$ of characteristic $0$ and let $\phi:X\dashrightarrow X$ be a dominant rational self-map. Assume there exists no non-constant rational function $f:X\dashrightarrow \P^1$ such that $f\circ \phi=f$. Then exactly one of the following two statements must hold:
\begin{itemize}
\item[(I)] either the dynamical degree of $\phi$ equals $1$,  
\item[(II)] or the union of all $\phi$-invariant proper subvarieties of $X$ is Zariski dense.
\end{itemize}
\end{conjecture}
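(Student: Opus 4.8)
\medskip

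The plan is to establish Conjecture~\ref{conj:iff} in the case treated in this paper: $X$ a semiabelian variety and $\phi$ a regular dominant self-map. Write $\phi=T_{x_0}\circ\psi$ with $x_0\in X(K)$ and $\psi$ a group endomorphism of $X$; since $\phi$ is dominant, $\psi$ is a surjective endomorphism, hence an isogeny. The dynamical degree is carried by the linear part: one has $\lambda_1(\phi)=\lambda_1(\psi)$, a standard fact obtained via a smooth projective equivariant compactification of $X$. The eigenvalues $\gamma_i$ of the induced action of $\psi$ on $H_1(X,\mathbb{Z})$ are nonzero algebraic integers whose Galois conjugates again occur among them, so Kronecker's theorem yields $\lambda_1(\psi)=1$ if and only if $\psi$ is quasi-unipotent (all $\gamma_i$ are roots of unity). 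Thus it suffices to prove that, under the hypothesis that $\phi$ preserves no non-constant rational function, the union of the $\phi$-invariant proper subvarieties of $X$ is Zariski dense precisely when $\psi$ is not quasi-unipotent.

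First I would prove that $\lambda_1(\phi)>1$ implies (II); the no-fibration hypothesis is not needed here, and I proceed by induction on $\dim X$. If $X$ admits a proper nonzero $\psi$-invariant sub-semiabelian variety, one can choose such a $B$ so that $\bar\psi$ has no root-of-unity eigenvalue on $X/B$ — take $B$ to be the maximal sub-semiabelian variety on which $\psi$ is quasi-unipotent (this is proper since $\lambda_1(\psi)>1$) if that is nonzero, and otherwise any proper nonzero $\psi$-invariant sub-semiabelian variety — whence $\lambda_1(\bar\psi)>1$ by Kronecker, $\dim(X/B)<\dim X$, and (since $\psi(B)=B$) the map $\phi$ descends to $X/B$; the inductive hypothesis applied to $X/B$ then furnishes a dense family of $\bar\phi$-invariant proper subvarieties, whose preimages under the $\phi$-equivariant projection $X\to X/B$ form a dense family of $\phi$-invariant proper subvarieties of $X$. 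If instead $X$ is $\psi$-simple, then $\lambda_1(\psi)>1$ forces $\psi$ to have no root-of-unity eigenvalue, so $\operatorname{id}-\psi$ is an isogeny; solving $(\operatorname{id}-\psi)(w)=x_0$ and conjugating $\phi$ by $T_w$, we may assume $\phi=\psi$. Now $\operatorname{Per}(\psi)=\bigcup_{n\ge 1}\ker(\psi^n-\operatorname{id})$ is a subgroup of $X$, and $\psi^n-\operatorname{id}$ is an isogeny of degree $|\prod_i(\gamma_i^n-1)|$, which tends to $\infty$ since some $|\gamma_i|>1$; hence $\operatorname{Per}(\psi)$ is infinite, so its Zariski closure is a closed subgroup of $X$ whose identity component is a nonzero $\psi$-invariant sub-semiabelian variety, hence all of $X$ by simplicity, so $\operatorname{Per}(\psi)$ is Zariski dense. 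Since each periodic point lies on a finite — thus proper — $\psi$-invariant orbit, the union of the $\psi$-invariant proper subvarieties of $X$ is Zariski dense, as wanted.

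For the converse I would show that if $\lambda_1(\phi)=1$ and $\phi$ preserves no fibration, then $\phi$ has no invariant proper subvariety at all, so that (II) fails. Here $\psi$ is quasi-unipotent. The transparent case is when $\psi$ has finite order: replacing $\phi$ by a suitable iterate and conjugating by a translation turns $\phi$ into a translation $T_y$, and the no-fibration hypothesis forces $\langle y\rangle$ to be Zariski dense in $X$ — otherwise $\phi$ descends to the positive-dimensional quotient $X/\overline{\langle y\rangle}^{\,\circ}$ as a finite-order self-map, and averaging a rational function over its orbit produces an invariant non-constant rational function — while a translation by a topological generator manifestly has no invariant proper subvariety. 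For general quasi-unipotent $\psi$ one passes to the quotient $X/X'$, where $X'$ is the identity component of the image of $\psi^L-\operatorname{id}$ for $L$ chosen so that $\psi^L$ is unipotent; the induced linear part on $X/X'$ has finite order, and combining the finite-order analysis with the Zariski dense orbit theorem for semiabelian varieties of \cite{G-Matt}, one concludes that every $\phi$-orbit is Zariski dense.

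The step I expect to be the main obstacle is precisely this last, genuinely non-semisimple, quasi-unipotent case. The ``skew'' maps $(s,t)\mapsto(\alpha st,\beta t)$ on $\G_m^2$, with $\alpha,\beta$ multiplicatively independent and not roots of unity, already exhibit $\lambda_1=1$ together with the absence of an invariant fibration, and one must show that such a map nonetheless admits no invariant proper subvariety — that is, one must control the interaction of the translation part $x_0$ with a Jordan block of $\psi$. This requires a careful analysis built on the structure theory of semiabelian varieties (invariant characters on the toric part; Poincar\'e reducibility and the positivity of the Rosati involution on the abelian part), together with the dense-orbit results of \cite{G-Matt}; by contrast, the remaining steps above are fairly mechanical once this one is in hand.
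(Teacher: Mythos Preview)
Your treatment of the direction $\lambda_1(\phi)>1 \Rightarrow$ (II) is correct and proceeds differently from the paper. Rather than inducting on $\dim X$, the paper passes to an iterate so that the only root-of-unity eigenvalue of $\Psi$ is $1$, writes the minimal polynomial as $P(x)=(x-1)^r Q(x)$, and decomposes $X$ up to isogeny as $X_1+X_2$ with $X_1=Q(\Psi)(X)$ and $X_2=(\Psi-{\rm id})^r(X)$. On $X_2$ the map $\Psi-{\rm id}$ is an isogeny, so after a translation-conjugation $\phi_2$ becomes a group endomorphism; then every torsion point of $X_2$ of order coprime to $\#\ker(\Psi_2)$ is periodic, giving a Zariski dense set of periodic points directly (no degree-growth count and no $\psi$-simplicity hypothesis needed). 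Pushing the subvarieties $X_1\times S$, with $S$ a finite periodic orbit in $X_2$, through the addition isogeny $X_1\oplus X_2\to X$ finishes. Your inductive argument is a legitimate alternative; the paper's decomposition is a one-shot version of the same idea that avoids the case split.

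For the direction $\lambda_1(\phi)=1$ with no invariant fibration $\Rightarrow$ no proper invariant subvariety, you correctly isolate the genuinely unipotent case as the crux, but your sketch does not close it: \cite{G-Matt} produces \emph{one} dense orbit, not that \emph{every} orbit is dense, and passing to $X/X'$ with $X'=(\psi^L-{\rm id})(X)^\circ$ does not by itself control invariant subvarieties inside the fibres. The paper resolves this by appealing to outside structure theory rather than by the direct analysis you outline. After iterating so that $\Psi$ is unipotent, it observes (Proposition~\ref{prop:nilpotent}, essentially your ``otherwise $\phi$ descends to a nontrivial quotient'' argument, but applied to the quotient $\bar X:=X/(\Psi-{\rm id})(X)$) that the no-fibration hypothesis forces the image $\bar\gamma$ of the translation vector to generate a Zariski dense subgroup of $\bar X$. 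It then invokes \cite[Theorem~7.2]{Zinovy}, which states precisely that a unipotent-plus-translation automorphism of an abelian variety satisfying this density condition is \emph{wild} (every orbit is Zariski dense), together with \cite[Theorem~3.1]{P-R} to transport the key ingredient---that irreducible invariant subvarieties must be cosets---to the semiabelian setting. Your proposed structural route via Rosati positivity and toric characters would amount to reproving \cite[Theorem~7.2]{Zinovy}; the paper simply cites it.
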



\subsection{Motivation for our conjectures}


It is possible that neither item~(i) nor (ii) in Conjecture \ref{conj:strong} holds if one does not impose the constraint on the dynamical degree. For example, every automorphism of $\mathbb{A}^2$ of dynamical degree greater than one has a Zariski dense set of periodic points and does not preserve a non-constant fibration (see Xie \cite[Theorem 1.1(i)]{Xie1}).  On the other hand, work of Cantat \cite{Can}, Diller and Favre \cite{DF}, along with work of Xie \cite[Theorem 1.1]{Xie1} shows that for birational maps $\phi$ of surfaces over algebraically closed base fields of characteristic zero, exactly one of (i) and (ii) in Conjecture \ref{conj:iff} must hold when $\phi$ has dynamical degree one (see Theorem \ref{thm:Xie} for details).  Thus any counterexamples to either Conjecture \ref{conj:strong} or \ref{conj:iff} must have dimension at least three.  

Another important class of maps for which we can show Conjecture \ref{conj:strong} holds is for automorphisms $\phi$ that lie in the connected component ${\rm Aut}_0(X)$ of the identity of the automorphism group of an irreducible complex algebraic variety $X$.  In this case we, we consider the Zariski closure, $H$, of the subgroup of ${\rm Aut}_0(X)$ generated by $\sigma$ and apply Chevalley's theorem on constructible sets \cite[Theorem 3.16]{Harris} to deduce that if there is some point $x$ whose orbit under $H$ is Zariski dense, then the $H$-orbit contains a dense open subset $U$ of $X$. Thus, every point in $U$ will have dense orbit under $\phi$ and so we see condition (ii) holds unless no point in $X$ has a Zariski dense orbit, which in turn implies (i) holds.
     
Automorphisms $\phi$ lying in the connected component of the automorphism group of $X$, as above, all have dynamical degree one and one can regard rational self-maps of dynamical degree one as being a natural generalization of this important class of self-maps.  The two classes mentioned above (rational self-maps of surfaces of dynamical degree one and automorphisms in a connected algebraic group) give underpinning to Conjecture \ref{conj:strong}.

In Section~\ref{sec:future}, we connect our results with results concerning the representation theory of noncommutative algebras. In particular, we consider the class of algebras called \emph{twisted homogeneous coordinate rings}, which are constructed from a projective variety $X$, an automorphism $\sigma$ of $X$, and an ample invertible sheaf $\mathcal{L}$. Here it is known that the noetherian property for these algebras holds precisely when $\sigma$ has dynamical degree one, and the biregular case of Conjectures \ref{conj:strong} and \ref{conj:iff} for complex projective varieties is equivalent to existing conjectures about the representation theory for this class of algebras.


\subsection{Our results}

We prove Conjectures~\ref{conj:strong}~and~\ref{conj:iff} (even in slightly stronger forms) for regular self-maps of semiabelian varieties. We recall that a semiabelian variety (over an algebraically closed field) is an extension of an abelian variety by a power of the multiplicative group. Also, in order to define the dynamical degree for a self-map of a semiabelian variety $X$, one could consider a suitable compactification of $X$; however, as explained in Section~\ref{subsec:strategy} (see Proposition~\ref{prop:fact}), there is a simple characterization of dominant regular self-maps of dynamical degree $1$ for semiabelian varieties. 
\begin{theorem}
\label{thm:main}
Let $X$ be a semiabelian variety defined over an algebraically closed field $K$ of characteristic $0$, and let $\phi:X\longrightarrow X$ be an automorphism of dynamical degree $1$. Then exactly one of the following two statements must hold:
\begin{enumerate}
\item[(1)] either there exists a non-constant rational function $f:X\dashrightarrow \P^1$ such that $f\circ \phi=f$; 
\item[(2)] or there exists no proper $\phi$-invariant subvariety (equivalently, there exists no proper irreducible subvariety $Y\subset X$ which is periodic under the action of $\phi$, i.e., $\phi^\ell(Y)= Y$ for some $\ell\in\N$).
\end{enumerate}
\end{theorem}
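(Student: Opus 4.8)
The strategy is to combine the rigidity of semiabelian varieties, the characterization of dynamical degree one from Proposition~\ref{prop:fact}, and Kawamata's structure theorem for subvarieties of semiabelian varieties. I begin with two soft reductions. First, if $f\colon X\dashrightarrow\P^1$ is non-constant with $f\circ\phi=f$, then an irreducible component of a generic fibre of $f$ is a proper $\phi$-periodic subvariety; so (1) and (2) are mutually exclusive, and it suffices to show that the existence of a proper irreducible $\phi$-periodic $Y\subsetneq X$ forces (1). Second, passing to an iterate is harmless: if $F\circ\phi^\ell=F$ for some non-constant rational function $F$, then $K(X)^{(\phi^\ell)^*}$ has transcendence degree $\geq 1$ over $K$, the finite group $\langle\phi^*\rangle/\langle(\phi^\ell)^*\rangle$ acts on it with fixed field $K(X)^{\phi^*}$, and a finite extension does not change transcendence degree, so $\phi$ itself preserves a non-constant rational function; in particular every finite-order automorphism of $X$ preserves one. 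Finally, write $\phi=T_{x_0}\circ\psi$ with $\psi$ a group automorphism, and use Proposition~\ref{prop:fact}: the hypothesis $\lambda_1(\phi)=1$ means the eigenvalues of $\psi$ are roots of unity, so after replacing $\phi$ by an iterate I may assume $\psi$ is \emph{unipotent}, i.e.\ $\delta:=\psi-\mathrm{id}$ is a nilpotent element of $\operatorname{End}(X)$.

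Next I prove, by induction on $\dim X$, that if $\psi$ is unipotent and $Y\subsetneq X$ is a proper irreducible $\phi$-periodic subvariety then $\phi$ preserves a non-constant rational function. After another iterate, assume $\phi(Y)=Y$; then $\psi(Y)=Y-x_0$, so $\psi$ preserves $\operatorname{Stab}(Y)$ and hence its identity component $B:=\operatorname{Stab}(Y)^0$, a $\psi$-invariant semiabelian subvariety. If $B\neq 0$, the induced automorphism $\bar\phi=T_{\bar x_0}\circ\bar\psi$ of $X/B$ still has unipotent linear part and admits the proper invariant subvariety $Y/B$, so by induction $\bar\phi$ preserves a non-constant rational function, which pulls back along $X\to X/B$. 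Thus we may assume $\operatorname{Stab}(Y)$ is finite.

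This is the crux. By Kawamata's theorem a subvariety of a semiabelian variety with finite stabilizer is of log general type, hence (Iitaka) the group of birational self-maps of a smooth compactification preserving the boundary is finite, and therefore $\operatorname{Aut}(Y)$ is finite. Since $\phi|_Y\in\operatorname{Aut}(Y)$, some iterate satisfies $\phi^m|_Y=\mathrm{id}_Y$, i.e.\ $Y\subseteq\operatorname{Fix}(\phi^m)$. Writing $\phi^m=T_{z_0}\circ\psi^m$, the locus $\operatorname{Fix}(\phi^m)=(\mathrm{id}-\psi^m)^{-1}(z_0)$ is non-empty; if $\psi^m=\mathrm{id}$ then $z_0=0$, so $\phi$ has finite order and we are done by the first paragraph. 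Otherwise $\delta_m:=\psi^m-\mathrm{id}$ is a non-zero nilpotent endomorphism, so $X_1:=\ker(\delta_m)^0$ is a proper non-zero $\psi$-invariant semiabelian subvariety and $\operatorname{Fix}(\phi^m)$ is a finite union of cosets of $X_1$; the irreducible $Y$ lies in one such coset $w+X_1$, and since $\phi$ permutes these finitely many cosets, a further iterate $\phi'$ fixes $w+X_1$. Descending to $\bar X:=X/X_1$, the point $\pi(w)$ is a fixed point of the induced map $\bar\phi'$, and conjugating by the translation by $\pi(w)$ turns $\bar\phi'$ into the unipotent group automorphism $\bar\psi'$ of the positive-dimensional $\bar X$. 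Finally, for any unipotent group automorphism $\theta$ of a positive-dimensional semiabelian variety, $\operatorname{Im}(\theta-\mathrm{id})$ is a proper semiabelian subvariety (a nilpotent endomorphism cannot be surjective) on whose quotient $\theta$ acts as the identity; pulling a non-constant rational function back through $X\to\bar X\to\bar X/\operatorname{Im}(\bar\psi'-\mathrm{id})$ and undoing the conjugation yields a $\phi'$-, hence $\phi$-invariant non-constant rational function, completing the induction.

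I expect the main obstacle to be precisely the finite-stabilizer case: ruling out a proper $Y$ that dominates a quotient of $X$ on which $\phi$ acts with Zariski-dense orbits, so that no invariant fibration is visible on a lower-dimensional variety. The dynamical-degree-one hypothesis is what breaks this — unipotence of $\psi$ makes the finiteness of $\operatorname{Aut}(Y)$ bite, forcing $Y$ into $\operatorname{Fix}(\phi^m)$, which is a translate of a \emph{subgroup}, so one can descend to a genuine group automorphism and read off an invariant fibration; without quasi-unipotence this collapse fails (compare the automorphisms of $\A^2$ of dynamical degree $>1$ mentioned in the introduction, which have no invariant fibration yet a dense set of periodic points). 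The remaining work is bookkeeping: the precise form of Kawamata's structure theorem and the finiteness of automorphisms of quasi-projective log-general-type varieties over an algebraically closed field of characteristic zero, the verification that each quotient and conjugation above is compatible with $\phi$, and tracking the finitely many iterates taken along the way.
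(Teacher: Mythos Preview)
Your argument is correct and takes a genuinely different route from the paper's.  Both proofs begin the same way: write $\phi=T_{x_0}\circ\psi$, use Proposition~\ref{prop:fact} to replace $\phi$ by an iterate so that $\psi$ is unipotent, and note that passing to iterates is harmless.  After that the two diverge.

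The paper argues in the direction ``no invariant fibration $\Rightarrow$ no proper invariant subvariety''.  It first shows (Proposition~\ref{prop:nilpotent}) that the translation part $\gamma$ has Zariski-dense image in $X/(\psi-\mathrm{id})(X)$, and then invokes the criterion of Reichstein--Rogalski--Zhang \cite[Theorem~7.2]{Zinovy} for an automorphism to be \emph{wild} (every orbit Zariski dense).  The semiabelian extension of that criterion requires knowing that every irreducible $\phi$-invariant subvariety is a coset, which is supplied by Pink--Roessler \cite[Theorem~3.1]{P-R}.  So the external inputs are \cite{Zinovy} and \cite{P-R}.

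You argue contrapositively, by induction on $\dim X$.  Given a proper periodic $Y$, the stabilizer dichotomy either lets you descend to a smaller semiabelian variety, or (when $\operatorname{Stab}(Y)$ is finite) Kawamata's structure theorem forces $Y$ to be of log general type, whence $\operatorname{Aut}(Y)$ is finite and $Y\subseteq\operatorname{Fix}(\phi^m)$; since the latter is a finite union of cosets of a proper nonzero $\psi$-invariant subgroup $X_1$, you produce an invariant fibration on $X/X_1$ directly from unipotence.  Your external inputs are Kawamata's theorem and Iitaka's finiteness of automorphisms for log-general-type varieties; you never need \cite{Zinovy} or \cite{P-R}.  In effect, the step ``finite stabilizer $\Rightarrow$ $\phi^m|_Y=\mathrm{id}$ $\Rightarrow$ $Y$ lies in a coset'' recovers, by a different mechanism, the Pink--Roessler conclusion that invariant subvarieties are cosets.

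What each buys: the paper's proof is shorter once \cite{Zinovy} and \cite{P-R} are granted, and it yields the stronger statement that $\phi$ is wild.  Your proof is more self-contained and explicitly locates the invariant fibration, at the cost of the induction bookkeeping and the need to pin down the finiteness-of-automorphisms statement for possibly singular, quasi-projective $Y$ of log general type (functorial resolution plus Iitaka's theorem on birational automorphisms of log pairs, as you note, but this does require care).
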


\begin{remark}
\label{rem:equivalent}
The equivalence of the two statements from conclusion~(2) in Theorem~\ref{thm:main} is immediate since given an automorphism $\phi$ of some variety $X$, for any proper invariant subvariety $V$, its irreducible components must be periodic under the action of $\phi$. A similar argument applies also for the equivalent statement appearing in our next result.
\end{remark}

On the other hand, if the dynamical degree of a regular self-map $\phi$ on a semiabelian variety $X$ is greater than $1$, then we can prove that \emph{always} (regardless whether $\phi$ preserves a non-constant rational function or not) the union of the proper $\phi$-invariant subvarieties of $X$ is Zariski dense.
\begin{theorem}
\label{thm:main_2}
Let $X$ be a semiabelian variety defined over an algebraically closed field $K$ of characteristic $0$, and let $\phi:X\longrightarrow X$ be a dominant regular self-map of dynamical degree larger than $1$.   Then the union of all $\phi$-invariant proper subvarieties of $X$ is Zariski dense; equivalently, the union of all irreducible proper $\phi$-periodic subvarieties $Y\subset X$ (i.e., $\phi^\ell(Y)= Y$ for some $\ell\in\N$) is Zariski dense in $X$.
\end{theorem}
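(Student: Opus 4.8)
The plan is to reduce, via the structure theory of semiabelian varieties and the classification of their dominant endomorphisms, to an explicit description of $\phi$, and then exhibit a Zariski dense family of periodic subvarieties. First I would recall (as promised by Proposition~\ref{prop:fact}, which I am entitled to use) that a dominant regular self-map $\phi:X\to X$ of a semiabelian variety decomposes as $\phi = T_b \circ \psi$, where $\psi$ is an isogeny-like group endomorphism (an element of $\mathrm{End}(X)\otimes\Q$ acting on $X$) and $T_b$ is translation by a point $b\in X(K)$; moreover the dynamical degree $\lambda_1(\phi)$ is determined by the eigenvalues of the linear action of $\psi$ on the tangent space at the identity (or equivalently on $H^1$), and $\lambda_1(\phi)>1$ exactly when some such eigenvalue has absolute value $>1$. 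The key point I would extract is: since $\lambda_1(\phi)>1$, the endomorphism $\psi$ is \emph{not} an automorphism, so $\deg\psi \ge 2$ and $\ker\psi^n$ grows. This is exactly the feature absent in Theorem~\ref{thm:main}, where $\phi$ is an automorphism of dynamical degree $1$.

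The heart of the argument is then to produce proper periodic subvarieties whose union is dense. The natural candidates are translates of proper algebraic subgroups, together with periodic points. I would argue as follows. After replacing $\phi$ by an iterate (which does not affect the conclusion, since a $\phi^\ell$-periodic subvariety is $\phi$-periodic), one reduces to the case $b$ lies in a ``nice'' position relative to $\psi$. The crucial observation is that the equation $\phi(x) = x$, i.e.\ $\psi(x) + b = x$, that is $(\psi - \mathrm{id})(x) = -b$, is solvable: since $\lambda_1(\phi)>1$, no eigenvalue of $\psi$ equals $1$ is \emph{not} automatic, so instead I would work with the subgroup $H := \mathrm{im}(\psi - \mathrm{id})$ and the coset structure it induces. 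More robustly: for each $n$, the set of $\phi$-periodic points of period dividing $n$ is the preimage under $\phi^n - \mathrm{id}$ (a group endomorphism composed with a translation) of a point, hence a finite union of cosets of $\ker(\psi^n - \mathrm{id})_{\mathrm{conn}}$ — these are translates of algebraic subgroups, each of which is $\phi$-periodic. It therefore suffices to show that $\bigcup_n \mathrm{Fix}(\phi^n)$ is Zariski dense. If $\phi$ has infinitely many periodic points this needs an argument; the cleanest route is to show that the union of the connected algebraic subgroups $\ker(\psi^n - \mathrm{id})^\circ$, translated appropriately, already spans: because $\psi$ has an eigenvalue of absolute value $>1$ and (being a semiabelian endomorphism) satisfies an integer monic characteristic polynomial, the dimensions $\dim\ker(\psi^n-\mathrm{id})$ and the positions of the relevant cosets vary enough that their union cannot lie in a single proper subvariety.

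An alternative, and I suspect cleaner, strategy that I would pursue in parallel: handle the torus part and the abelian part separately using the exact sequence $1\to \G_m^r \to X \to A \to 0$, show $\phi$ descends to both pieces, prove density of periodic subvarieties on each factor by the explicit torus computation (monomial maps: translates of subtori cut out by monomial equations are periodic and dense exactly as in Example~\ref{ex}) and by the known results on abelian varieties (e.g.\ via the Manin--Mumford / Mordell--Lang circle of techniques already invoked in \cite{G-S,G-Sina}), and then lift. The main obstacle I anticipate is the gluing step: a periodic subvariety of $A$ together with a periodic subvariety of the fiber $\G_m^r$ need not combine to a periodic subvariety of $X$ unless one controls the translation part $b$ modulo the subgroup in question — this is precisely where the hypothesis $\lambda_1(\phi)>1$ (forcing $\psi-\mathrm{id}$ to be ``large'' on at least one factor, so that the relevant cohomological obstruction in $\mathrm{coker}(\psi-\mathrm{id})$ vanishes or is torsion) must be used decisively. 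Making that obstruction-vanishing precise, uniformly in $n$, is the step I expect to require the most care.
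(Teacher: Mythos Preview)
Your proposal has a genuine gap, and it starts with a false claim. You assert that since $\lambda_1(\phi)>1$, the endomorphism $\psi$ is not an automorphism (so $\deg\psi\ge 2$). This is incorrect: take $X=E\times E$ for an elliptic curve $E$ and let $\psi$ act by a hyperbolic matrix in $\GL_2(\Z)$, say $\left(\begin{smallmatrix}2&1\\1&1\end{smallmatrix}\right)$. This is a group \emph{automorphism} of $X$ with an eigenvalue of absolute value $>1$, hence $\lambda_1(\psi)>1$. So the ``$\ker\psi^n$ grows'' mechanism you propose is simply unavailable in general. You partially sense trouble when you note that ``no eigenvalue of $\psi$ equals $1$'' is not automatic, but your recovery --- looking at $\bigcup_n\mathrm{Fix}(\phi^n)$ and asserting that the cosets of $\ker(\psi^n-\mathrm{id})^\circ$ ``vary enough'' --- is not an argument, and your alternative strategy via the exact sequence $1\to\G_m^r\to X\to A\to 0$ is the wrong decomposition: the issue has nothing to do with torus versus abelian, and the gluing obstruction you anticipate is real and will not vanish by this route.

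The decomposition that actually works is along the \emph{minimal polynomial} $P(x)$ of $\psi$. After an iterate, write $P(x)=(x-1)^rQ(x)$ with $Q$ having no root-of-unity roots; since $\lambda_1(\phi)>1$, Proposition~\ref{prop:fact} forces $Q$ non-constant. Set $X_1:=Q(\psi)(X)$ and $X_2:=(\psi-\mathrm{id}_X)^r(X)$; then $X_1+X_2=X$ with finite intersection, $X_2$ is positive-dimensional, and the minimal polynomial of $\psi|_{X_2}$ is $Q$, which has no root equal to $1$. Consequently $\psi|_{X_2}-\mathrm{id}_{X_2}$ is an \emph{isogeny}, so the translation part on $X_2$ can be conjugated away: after a translation-conjugation of $\phi$, the induced map $\phi_2$ on $X_2$ is a genuine group endomorphism. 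Now the torsion points of $X_2$ of order coprime to $\#\ker\phi_2$ are $\phi_2$-periodic and Zariski dense; for each finite $\phi_2$-periodic set $S\subset X_2$, the image of $X_1\times S$ under the addition isogeny $X_1\oplus X_2\to X$ is a proper $\phi$-invariant subvariety, and these images are Zariski dense. The entire content lies in isolating the factor $X_2$ on which $\psi-\mathrm{id}$ is surjective --- this is exactly the step your proposal is missing.
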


We prove Theorems~\ref{thm:main}~and~\ref{thm:main_2} in Sections~\ref{sec:proof}~and~\ref{sec:proof_2}, respectively. We also discuss further directions for studying Conjectures~\ref{conj:strong}~and~\ref{conj:iff} in Section~\ref{sec:future}.


\subsection{Strategy for our proof}
\label{subsec:strategy}

In our proofs for both Theorems~\ref{thm:main}~and~\ref{thm:main_2}, we will employ the following characterization of regular dominant self-maps of semiabelian varieties of dynamical degree equal to $1$. First, we recall that (regardless of characteristic of the field of definition, as proven in \cite{Iitaka}), the regular self-maps of a semiabelian variety are compositions of translations with algebraic group endomorphisms. Furthermore, for any group endomorphism $\Psi$ of a semiabelian variety, there exists a monic polynomial $P\in\Z[x]$ such that $P(\Psi)=0$; for more details, we refer the reader to \cite[Section~2.1]{CGSZ}.

\begin{proposition}
\label{prop:fact}
Let $X$ be a semiabelian variety defined over a field of characteristic $0$ and let $\phi:=T\circ \Psi$ be a regular dominant self-map of $X$, where $T:X\longrightarrow X$ is a translation, while $\Psi$ is an algebraic group endomorphism of $X$. Let $P(x)$ be the minimal polynomial for $\Psi$. Then the dynamical degree of $\phi$ equals $1$ if and only if each root of $P(x)$ is a root of unity.
\end{proposition}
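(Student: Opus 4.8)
The plan is to express $\lambda_1(\phi)$ in terms of eigenvalues of the linear part $\Psi$ and then to invoke Kronecker's theorem. Write $1\to\G_m^t\to X\xrightarrow{\pi}A\to 1$ with $A$ an abelian variety of dimension $g$. Since $\phi$, hence $\Psi$, is dominant, $\Psi$ is an isogeny; since the toric part $\G_m^t$ is the largest connected affine subgroup of $X$, $\Psi$ restricts to an isogeny of $\G_m^t$, given by a matrix $M\in M_t(\Z)$ on the cocharacter lattice, and descends to an isogeny $\Psi_A$ of $A$; in particular $\det M\neq 0$ and $\deg\Psi_A\neq 0$. Recalling from \cite{CGSZ} that $\Psi$ acts faithfully on the lattice $H_1(X,\Z)$, which is an extension of $H_1(A,\Z)\cong\Z^{2g}$ by $H_1(\G_m^t,\Z)\cong\Z^t$ preserved by $\Psi$, the matrix of $\Psi$ on $H_1(X,\Z)$ is block upper triangular with diagonal blocks $M$ and the matrix $B$ of $\Psi_A$ on $H_1(A,\Z)$. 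Hence the set of roots of $P(x)$ is the union of the eigenvalues of $M$ and of $B$, and I claim this set consists of roots of unity if and only if $\rho(M)=1$ and $\rho(B)=1$, where $\rho(\cdot)$ denotes the spectral radius. Indeed, if $\rho(M)\le 1$ then, $|\det M|$ being a positive integer equal to the product of the absolute values of the eigenvalues of $M$, all those eigenvalues have absolute value exactly $1$; being the roots of a monic integer polynomial, they are closed under Galois conjugation, so Kronecker's theorem forces each to be a root of unity. The same argument applies to $B$, and the converse implication is trivial. Thus it suffices to show $\lambda_1(\phi)=\max\bigl(\rho(M),\rho(B)^2\bigr)$.

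The first step is to discard the translation. Fix a smooth projective compactification $\bar X$ of $X$ to which the translation action of $X$ on itself extends — such equivariant compactifications exist; concretely one may take the $(\P^1)^t$-bundle $\bar X=\P(\mathcal{O}_A\oplus\mathcal{M}_1)\times_A\cdots\times_A\P(\mathcal{O}_A\oplus\mathcal{M}_t)$ over $A$, where $\mathcal{M}_1,\dots,\mathcal{M}_t\in\operatorname{Pic}^0(A)$ classify the extension $X$ and, being algebraically trivial, are translation-invariant. Then every translation of $X$ extends to an automorphism of $\bar X$, while $\Psi$ extends to a dominant rational self-map $\bar\Psi$ of $\bar X$ lying over $\Psi_A:A\to A$. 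Writing $T$ as translation by $t\in X$, one computes $\phi^n=\tau_n\circ\Psi^n$, where $\tau_n$ is translation by $\sum_{i=0}^{n-1}\Psi^i(t)$; on $\bar X$ this reads $\bar\phi^n=\bar\tau_n\circ\bar\Psi^n$ with $\bar\tau_n$ an automorphism, so $(\bar\phi^n)^*\mathcal{L}=(\bar\Psi^n)^*(\bar\tau_n^*\mathcal{L})$ for any divisor class $\mathcal{L}$. Since $\bar\tau_n$ lies in the connected group $\operatorname{Aut}_0(\bar X)$, it acts trivially on the real Néron–Severi space $N^1(\bar X)_\R$; as pullback of classes along dominant rational maps between smooth projective varieties respects numerical equivalence (pass to a resolution of indeterminacy), we get $(\bar\phi^n)^*\mathcal{L}\equiv(\bar\Psi^n)^*\mathcal{L}$, hence $\lambda_1(\phi)=\lambda_1(\Psi)$. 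We are reduced to the case $\phi=\Psi$.

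The second step is to compute $\lambda_1(\Psi)$ via the fibration $p:\bar X\to A$. By the theory of dynamical degrees of fibered meromorphic maps (see \cite{Annali} and the references therein), $\lambda_1(\bar\Psi)=\max\bigl(\lambda_1^{\mathrm{rel}}(\bar\Psi),\lambda_1(\Psi_A)\bigr)$, where $\lambda_1^{\mathrm{rel}}$ is the relative dynamical degree along the $(\P^1)^t$-fibers. The fiberwise dynamics is the monomial self-map of $(\P^1)^t$ attached to $M$, composed with a torus translation which, lying in $\operatorname{Aut}_0\bigl((\P^1)^t\bigr)$, does not affect degrees; so $\lambda_1^{\mathrm{rel}}(\bar\Psi)=\rho(M)$ by the computation of degree growth for monomial maps — here the subtlety is that iterated pullbacks of the rational map cancel, so the growth rate is $\rho(M)$ rather than the larger spectral radius of the entrywise absolute value of $M$. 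On the base, $\Psi_A$ is a morphism of a projective variety, so $\lambda_1(\Psi_A)$ is the spectral radius of $\Psi_A^*$ acting on $N^1(A)_\R$; identifying $N^1(A)_\R$ with the Rosati-symmetric part of $\operatorname{End}(A)\otimes\R$, the operator $\Psi_A^*$ becomes $\beta\mapsto\Psi_A'\,\beta\,\Psi_A$ with $\Psi_A'$ the Rosati adjoint, of spectral radius $\rho(B)^2$: the bound $\le$ follows from submultiplicativity of a suitable norm together with $\lim_n\|\Psi_A^n\|^{1/n}=\rho(B)$, and the bound $\ge$ by iterating this operator on the polarization class and using positivity of the Rosati form. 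Hence $\lambda_1(\phi)=\lambda_1(\Psi)=\max\bigl(\rho(M),\rho(B)^2\bigr)$, which equals $1$ precisely when $\rho(M)=1=\rho(B)$, i.e.\ precisely when every root of $P$ is a root of unity.

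I expect the second step to be the main obstacle: proving that the monomial map attached to $M$ has dynamical degree $\rho(M)$, and carrying out the Rosati computation for $\lambda_1(\Psi_A)$. Both are known; moreover, if one only wants the stated equivalence and not the exact value of $\lambda_1(\phi)$, one can avoid these. For the ``only if'' direction it is enough that the dynamical degree does not increase under passing to an invariant subvariety or to an equivariant quotient, which gives $\lambda_1(\phi)\ge\lambda_1\bigl(\Psi|_{\G_m^t}\bigr)=\rho(M)$ and $\lambda_1(\phi)\ge\lambda_1(\Psi_A)\ge\rho(B)$; and for the ``if'' direction it is enough to observe that when $M$ and $B$ are quasi-unipotent the pullbacks $(\bar\Psi^n)^*$ on $N^1(\bar X)_\R$ grow only polynomially in $n$, so that $\lambda_1(\phi)=1$.
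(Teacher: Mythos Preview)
Your argument is correct and shares the paper's overall skeleton: reduce $\lambda_1(\phi)$ to $\lambda_1(\Psi)$ by showing translations do not affect degree growth, identify $\lambda_1(\Psi)=1$ with the condition that all eigenvalues of $\Psi$ on first (co)homology lie on the unit circle, and finish with Kronecker's theorem. The difference lies entirely in the middle step. The paper disposes of it in one sentence by invoking \cite{Matsuzawa-Sano} for the equality of $\lambda_1(\Psi)$ with the spectral radius of $\Psi^*$ on $H^1(X)$; you instead compute $\lambda_1(\Psi)$ directly, building an explicit equivariant $(\P^1)^t$-compactification $\bar X\to A$, applying the relative dynamical degree product formula to the fibration, and feeding in the known values $\rho(M)$ for monomial maps and $\rho(B)^2$ for abelian-variety isogenies to arrive at $\lambda_1(\Psi)=\max\bigl(\rho(M),\rho(B)^2\bigr)$. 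Your handling of the translation step is also more careful than the paper's one-line remark, spelling out why $\bar\tau_n\in\operatorname{Aut}_0(\bar X)$ acts trivially on $N^1(\bar X)_{\R}$. What your route buys is self-containment and an exact formula for $\lambda_1(\phi)$; what the paper's route buys is brevity by outsourcing to \cite{Matsuzawa-Sano}. The fallback you sketch at the end---monotonicity of $\lambda_1$ under invariant subvarieties and quotients for one direction, and polynomial growth of $(\bar\Psi^n)^*$ in the quasi-unipotent case for the other---is also valid and is perhaps the lightest way to obtain just the stated equivalence without computing $\lambda_1$ exactly.
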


\begin{proof}
The proof of this fact is essentially covered in \cite{Matsuzawa-Sano}. First of all, the dynamical degree of $\phi$ equals the dynamical degree of $\Psi$ (since each iterate $\phi^n$ of $\phi$ is a composition of $\Psi^n$ with a suitable translation). Second, $\lambda_1(\Psi)=1$ if and only if the spectral radius of $\Psi^*|_{H^1(X)}$ is equal to $1$ and so, all roots of the polynomial $P$ must have absolute value equal to $1$ (for more details, see \cite{Matsuzawa-Sano}). Then a classical theorem of Kronecker regarding algebraic numbers whose Galois conjugates all have absolute value equal to $1$ yields that all roots of $P(x)$ must be roots of unity, as desired.
\end{proof}

Assume now that the dynamical degree of $\phi:=T\circ \Psi$ equals $1$ (as in Theorem~\ref{thm:main}). Then we get that there exist positive integers $\ell$ and $m$ such that 
\begin{equation}
\label{eq:cyclotomic}
\left(\Psi^\ell - {\rm id}_X\right)^m=0.
\end{equation}
Since the conclusion in Theorem~\ref{thm:main} is unaltered if we replace our self-map $\phi$ by an iterate of it (which can be seen by looking at the irreducible periodic subvarieties $Y\subset X$, as in  Remark~\ref{rem:equivalent}), then replacing $\phi$ by $\phi^\ell$ (see Equation~\eqref{eq:cyclotomic}), we may assume that $\Psi$ is an unipotent algebraic group endomorphism. Then employing \cite[Theorem~7.2]{Zinovy} (along with \cite[Theorem~3.1]{P-R}) allows us to finish the proof of Theorem~\ref{thm:main}; in the language of \cite{Zinovy}, the automorphism $\phi$ is \emph{wild} (see Section~\ref{subsec:wild}) and so, it does not have proper $\phi$-invariant subvarieties.  We also note that one could  obtain the desired conclusion from Theorem~\ref{thm:main} by using alternatively  more combinatorial arguments akin to the ones employed in the proof from \cite{G-Sina-20}.

Finally, in order to prove Theorem~\ref{thm:main_2} (whose conclusion is once again unchanged if one replaces $\phi=T\circ \Psi$ by a suitable iterate of it), we analyze the action of $\Psi$ on $X$ according to the roots of its minimal polynomial $P(x)$; for this part our arguments are somewhat similar to the ones employed in \cite{G-S, G-Matt}.


\section{Proof of Theorem~\ref{thm:main}}
\label{sec:proof}


\subsection{General setup for our proof}
\label{subsec:general}

In this section we prove Theorem~\ref{thm:main}; so, we work under the hypotheses of Theorem~\ref{thm:main} for an automorphism $\phi$ of dynamical degree $1$ of a semiabelian variety $X$ defined over an algebraically closed field $K$ of characteristic $0$. Also, we have that the automorphism $\phi$ is a composition of a translation $T$ (i.e., for each $x\in X$, we have $T(x)=x+\gamma$ for some given point $\gamma\in X(K)$) with an algebraic group automorphism $\Psi$. Furthermore, as explained in Section~\ref{subsec:strategy} (note that replacing $\phi$ by an iterate does not change the set of $\phi$-invariant subvarieties), at the expense of replacing $\phi$ by a suitable iterate, we may assume $\Psi$ is unipotent, i.e.,
\begin{equation}
\label{eq:unipotent}
\left(\Psi - {\rm id}_X\right)^m=0,
\end{equation}
for some positive integer $m$.

Also, from now on, we assume $\phi$ does not preserve a non-constant fibration, i.e., condition~(1) from Theorem~\ref{thm:main} does not hold. Then we will prove that there are no proper $\phi$-invariant subvarieties.


\subsection{Analyzing the unipotent part of the automorphism}
\label{subsec:nilpotent}

We continue with our notation and convention for our automorphism $\phi=T\circ \Psi$ of the semiabelian variety $X$:
\begin{itemize}
\item[(a)] $\phi$ preserves no non-constant fibration;
\item[(b)] $T$ is a translation by a point $\gamma\in X(K)$; and
\item[(c)] $\Phi$ is a unipotent algebraic group automorphism, i.e., the map $\Phi_0:=\Phi-{\rm Id}_X$ is a nilpotent algebraic group endomorphism for $X$.
\end{itemize}

The following technical result (which is inspired by \cite[Theorem~7.2]{Zinovy})  will be crucially employed in Section~\ref{subsec:wild}.

\begin{proposition}
\label{prop:nilpotent}
We let $\bar{X}:=X/\Phi_0(X)$ and for each point $\alpha\in X$, we denote by $\bar{\alpha}$ its image under the natural projection map $\pi:X\longrightarrow \bar{X}$. Under the notation and assumptions from conditions~(a)--(c) above, we must have that the cyclic group generated by $\bar{\gamma}$ is Zariski dense in $\bar{X}$.
\end{proposition}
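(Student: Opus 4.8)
The plan is to argue by contradiction: assuming the cyclic group $\langle\bar\gamma\rangle$ is \emph{not} Zariski dense in $\bar X$, I will exhibit a non-constant $\phi$-invariant rational function on $X$, contradicting hypothesis~(a).

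The first step is to push $\phi$ down to $\bar X$ and identify the resulting map. Since $\Phi_0$ is a homomorphism of algebraic groups, $\Phi_0(X)$ is a connected closed subgroup of $X$, so $\bar X=X/\Phi_0(X)$ is a semiabelian variety and $\pi\colon X\to\bar X$ is a surjective homomorphism. Because $\Phi_0=\Phi-{\rm Id}_X$ commutes with $\Phi$ and $\Phi$ is surjective, we have $\Phi(\Phi_0(X))=\Phi_0(\Phi(X))=\Phi_0(X)$; hence if $\pi(x)=\pi(x')$ then $\phi(x)-\phi(x')=\Phi(x-x')\in\Phi_0(X)$, so $\phi$ descends to a self-map $\bar\phi$ of $\bar X$ with $\pi\circ\phi=\bar\phi\circ\pi$. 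Moreover $\pi(\phi(x))=\pi(\Phi(x)+\gamma)=\pi(x+\Phi_0(x))+\bar\gamma=\bar x+\bar\gamma$, since $\Phi_0(x)\in\ker\pi$; that is, $\bar\phi$ is simply translation by $\bar\gamma$ on $\bar X$.

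The second step uses the assumed failure of density. Let $G\subsetneq\bar X$ be the Zariski closure of $\langle\bar\gamma\rangle$, a proper closed subgroup; since $\bar X$ is irreducible, $\dim G<\dim\bar X$, so $B:=\bar X/G$ is a positive-dimensional semiabelian variety with quotient homomorphism $q\colon\bar X\to B$ satisfying $q(\bar\gamma)=0$. Choose a non-constant rational map $h\colon B\dashrightarrow\P^1$ (which exists because $\dim B>0$) and set $f:=h\circ q\circ\pi\colon X\dashrightarrow\P^1$. As $q\circ\pi$ is dominant and $h$ is non-constant, $f$ is non-constant; and since $q\circ\bar\phi=q$ (because $\bar\gamma\in\ker q$), we get
\[
f\circ\phi=h\circ q\circ\pi\circ\phi=h\circ q\circ\bar\phi\circ\pi=h\circ q\circ\pi=f .
\]
Thus $f$ is a non-constant $\phi$-invariant fibration, contradicting~(a); therefore $\langle\bar\gamma\rangle$ is Zariski dense in $\bar X$.

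Once the descent is set up, the argument is purely formal, so there is no real obstacle; the one point that genuinely uses the hypotheses is the second half of the first step, namely that the map induced by $\phi=T\circ\Phi$ on $X/\Phi_0(X)$ is \emph{exactly} a translation — this is precisely where $\Phi_0=\Phi-{\rm Id}_X$, i.e.\ $\Phi\equiv{\rm Id}$ modulo $\Phi_0(X)$, enters. (Nilpotence of $\Phi_0$ plays no role here; were $\bar X$ a point the statement would be vacuous, but that case is excluded by the density assumption.) The remaining ingredients — that a positive-dimensional variety admits a non-constant rational map to $\P^1$, and that it pulls back to a non-constant rational function along the dominant morphism $q\circ\pi$ — are standard.
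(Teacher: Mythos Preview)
Your proof is correct and follows essentially the same approach as the paper's: both argue by contradiction, take a proper closed subgroup of $\bar X$ containing $\bar\gamma$, and observe that the induced quotient map of $X$ is a non-constant $\phi$-invariant fibration, violating~(a). The paper is slightly more direct in that it skips the explicit descent of $\phi$ to $\bar X$ and instead verifies $\phi(x)-x=\Phi_0(x)+\gamma\in Y:=\pi^{-1}(\bar Y)$ in one line, but your two-step descent (first to $\bar X$, then to $B=\bar X/G$) is the same computation unpacked.
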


\begin{proof}
We argue by contradiction and therefore, assume there exists a proper algebraic subgroup $\bar{Y}\subset \bar{X}$ containing $\bar{\gamma}$. Then we let $Y:=\pi^{-1}\left(\bar{Y}\right)$, which is a proper algebraic subgroup of $X$. We claim that the projection map $g:X\longrightarrow X/Y$ is left invariant by our automorphism $\phi$. Indeed, for any point $x\in X$, we have that 
$$\phi(x)-x=\Phi_0(x)+\gamma\in Y,$$
and thus, $g\circ \phi = g$, as claimed. Since $g$ is not the trivial map (because $Y$ is a proper algebraic subgroup of $X$), we contradict condition~(a) above; hence $\bar{\gamma}$ must generate indeed $\bar{X}$. This concludes our proof of Proposition~\ref{prop:nilpotent}. 
\end{proof}


\subsection{Wild automorphisms}
\label{subsec:wild}

The following notion was studied in \cite{Zinovy}.
\begin{definition}
\label{def:wild}
An automorphism $\phi$ of some variety $X$ is called wild if the orbit of each point in $X$ is Zariski dense.
\end{definition}

It is immediate to see that if an automorphism $\phi:X\longrightarrow X$ is wild, then there are no proper $\phi$-invariant subvarieties. So, we are left to show that our automorphism $\phi=T\circ \Psi$ of the semiabelian variety $X$ is wild. However, since $\phi$ satisfies the conclusion of Proposition~\ref{prop:nilpotent} (note that we are working under the assumption~(a) above saying that $\phi$ leaves invariant no non-constant rational function), then \cite[Theorem~7.2]{Zinovy} delivers the desired conclusion that the automorphism $\phi$ must be wild. 

Now, strictly speaking, \cite[Theorem~7.2]{Zinovy} is proven under the assumption that $X$ is an abelian variety. However, its proof goes verbatim in the case $X$ is semiabelian since the only part where the authors of \cite{Zinovy} employed the assumption about $X$ being abelian was to infer that any irreducible $\phi$-invariant subvariety of $X$ must be a translate of a (connected) algebraic subgroup. For this last result, indeed they used the fact that $X$ was abelian, as in their proof from \cite[Corollary~4.3]{Zinovy}. However, we can replace the use of \cite[Corollary~4.3]{Zinovy} with the use of \cite[Theorem~3.1]{P-R}, which would still guarantee that also in the semiabelian case, the irreducible $\phi$-invariant subvarieties must be cosets of algebraic subgroups. Indeed, the assumption~(a) above that $\phi$ admits no non-constant invariant fibration means that $\phi$ does not induce a finite order automorphism of a nontrivial quotient of $X$ and therefore, according to \cite[Theorem~3.1]{P-R}, each $\phi$-invariant irreducible subvariety $Z$ of $X$ must have trivial quotient through its stabilizer $W$; hence $Z$ would be a coset of the algebraic subgroup $W$, as desired. 

So, indeed, $\phi:X\longrightarrow X$ is a wild automorphism; therefore, there  are no proper $\phi$-invariant subvarieties. This concludes our proof of Theorem~\ref{thm:main}.


\section{Proof of Theorem~\ref{thm:main_2}}
\label{sec:proof_2}


\subsection{Generalities}
\label{subsec:generalities_2}

We work under the hypotheses from Theorem~\ref{thm:main_2}; in particular, we let $\phi=T\circ \Psi$, where $T$ is a translation on the semiabelian variety $X$, while $\Psi$ is an algebraic group endomorphism.

In our proof of Theorem~\ref{thm:main_2} we may replace $\phi$ by its conjugate $T_\alpha\circ \phi\circ T_{-\alpha}$ (where $T_c$ always represents the translation-by-$c$ map for any given point $c\in X(K)$) since this would not affect the dynamical degree of our map, nor the conclusion that the union of all invariant subvarieties is Zariski dense; note that $Z$ is $\phi$-invariant if and only if $Z+\alpha$ is invariant under $T_\alpha\circ \phi\circ T_{-\alpha}$.


\subsection{The minimal polynomial}

We let $P(x)\in\Z[x]$ be the (monic) minimal polynomial for $\Psi$. At the expense of replacing $\phi$ by a suitable iterate (which, in particular, leads to replacing $\Psi$ by the corresponding iterate), we may assume that each root of $P(x)$ is either equal to $1$, or it is not a root of unity (nor equal to $0$, since $\Psi$ must be an isogeny because $\phi$ is a dominant map). Then we write
$$P(x)=(x-1)^r\cdot Q(x),$$
for some non-negative integer $r$ (which is the order of the root $1$ in $P(x)$)  and some (monic) polynomial $Q(x)\in\Z[x]$. Now, since we assumed that $\phi$ (and therefore $\Psi$) has dynamical degree larger than $1$, then it means that $P(x)$ has at least one root which is not a root of unity and so, $Q(x)$ must be a non-constant polynomial (whose roots are not roots of unity, by our assumption that all roots of unity appearing among the roots of the polynomial $P(x)$ must equal $1$).


\subsection{Decomposing the action of our map}

We consider the following connected algebraic subgroups of $X$, defined as follows: $X_2:=\left(\Psi-{\rm Id}_X\right)^r(X)$ and also, let $X_1:=Q(\Psi)(X)$. We note that if $r=0$, then $X_1$ is the trivial semiabelian variety. On the other hand, since $P(x)\ne (x-1)^r$ (because the dynamical degree of $\phi$ and thus of $\Psi$ is not equal to $1$), we know that 
\begin{equation}
\label{eq:nontrivial_semiabelian}
X_2\text{ is a nontrivial semiabelian variety.}
\end{equation}

Since the polynomials $(x-1)^r$ and $Q(x)$ are coprime (and their product kills the endomorphism $\Psi$), then arguing as in \cite[Lemma~6.1]{G-S} (see also the explanation around \cite[Equation~(4.0.2)]{G-Matt}), we have that 
\begin{equation}
\label{eq:disjoint}
X_1+X_2=X\text{ and }X_1\cap X_2\text{ is finite.}
\end{equation}
So, letting our translation map $T:X\longrightarrow X$ correspond to the point $\gamma\in X(K)$, then we can find $\gamma_i\in X_i(K)$ (for $i=1,2$) such that $\gamma=\gamma_1+\gamma_2$.  Also, $\Psi$ induces dominant algebraic group endomorphisms $\Psi_i:=\Psi|_{X_i}$ for $i=1,2$. Furthermore, the minimal polynomial of $\Psi_1$ (as an endomorphism of $X_1$) is $(x-1)^r$, while the minimal polynomial for $\Psi_2$ (as an endomorphism of $X_2$) is $Q(x)$.  For each $i=1,2$, we let $\phi_i:X_i\longrightarrow X_i$ be given by the composition of the translation-by-$\gamma_i$ with the group endomorphism $\Psi_i$. Finally, we have that for each point $x\in X$ written as $x=x_1+x_2$ for $x_i\in X_i$ (see equation~\eqref{eq:disjoint}), then we have
\begin{equation}
\label{eq:disjoint_2}
\phi(x)=\phi_1(x_1)+\phi_2(x_2).
\end{equation}


\subsection{Conjugating one of the maps to a group endomorphism}

Since the minimal polynomial of $\Psi_2:X_2\longrightarrow X_2$ does not have roots equal to $1$ (actually, not even roots of unity), the algebraic group endomorphism $\Psi_2-{\rm Id}_{X_2}$ (of $X_2$) must be dominant, and so there exists $\beta_2\in X_2(K)$ such that $\left(\Psi_2-{\rm Id}_{X_2}\right)(\beta_2)=\gamma_2$. Then conjugating $\phi_2$ by the translation $T_{\beta_2}$ given by $\beta_2$ (i.e., replacing $\phi_2$ by $T_{\beta_2}\circ \phi_2\circ T_{-\beta_2}$) we obtain the group endomorphism $\Psi_2$. 

So, at the expanse of conjugating $\phi$ by the translation-by-$\beta_2$ map on $X$ (note that $\beta_2\in X_2\subseteq X_1$), we may assume that the dominant regular map $\phi_2:X_2\longrightarrow X_2$ is an algebraic group endomorphism (also note, as explained in Section~\ref{subsec:generalities_2}, that we can always replace our map with a conjugate of it by a translation map).


\subsection{Periodic points for an algebraic group endomorphism}

The following easy fact will be crucial in the conclusion of our proof from Section~\ref{subsec:conclusion_2}.

\begin{proposition}
\label{prop:easy_fact}
Let $Z$ be a semiabelian variety defined over a field of characteristic $0$ and let $\Phi$ be a dominant algebraic group endomorphism of $Z$. Then the set of periodic points of $Z$ is Zariski dense.
\end{proposition}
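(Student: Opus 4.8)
The plan is to exhibit an explicit Zariski dense set of periodic points consisting of torsion points of $Z$. First I would note that a dominant algebraic group endomorphism $\Phi$ of a semiabelian variety is automatically an isogeny: its image is a closed subgroup of $Z$ (the image of a homomorphism of algebraic groups is closed) which is also dense, hence equals $Z$, and then a dimension count on the fibers forces $\ker\Phi$ to be finite. Let $d:=|\ker\Phi|$ and fix a prime number $\ell$ not dividing $d$ (possible since $d$ has only finitely many prime factors). The two assertions to verify are then: (1) every $\ell$-power torsion point of $Z$ is periodic under $\Phi$; and (2) the $\ell$-power torsion points of $Z$ are Zariski dense in $Z$.

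For (1): since $\Phi$ is a group homomorphism, it maps the $\ell^k$-torsion subgroup $Z[\ell^k]$ into itself, and the restriction $\Phi|_{Z[\ell^k]}$ has kernel $\ker\Phi\cap Z[\ell^k]$, a group whose order divides both $d$ and a power of $\ell$, hence is trivial. Thus $\Phi$ restricts to an injective, and therefore bijective, self-map of the finite group $Z[\ell^k]$, so every point of $Z[\ell^k]$ lies on a finite $\Phi$-orbit, i.e.\ is periodic. Letting $k$ vary shows that the set of periodic points of $\Phi$ contains every $\ell$-power torsion point of $Z$.

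For (2): after extending scalars to $\bar K$ (Zariski density being insensitive to this), write $Z$ as an extension of an abelian variety $A$ of dimension $g$ by a torus $\mathbb{G}_m^n$. Using divisibility of the torus and the snake lemma for multiplication by $\ell^k$, one sees that $Z[\ell^k]$ is finite of order $\ell^{k(n+2g)}$, with $n+2g\geq\dim Z\geq 1$. Let $W$ be the Zariski closure of $\bigcup_k Z[\ell^k]$; as the closure of a subgroup it is a closed algebraic subgroup, whose identity component $W^0$ is a semiabelian subvariety with $W/W^0$ finite. Since each $Z[\ell^k]$ is an $\ell$-group, its image in $W/W^0$ has order bounded independently of $k$, so $|Z[\ell^k]\cap W^0|$ grows like $\ell^{k(n+2g)}$; comparing with $|W^0[\ell^k]|=\ell^{k(n'+2g')}$, where $n'+2g'\leq n+2g$, and letting $k\to\infty$ forces $n'+2g'=n+2g$. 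Hence $\dim W^0=\dim Z$, so $W^0=Z$ and $W=Z$.

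I do not expect a serious obstacle: once steps (1) and (2) are isolated, the argument is essentially formal, the only non-group-theoretic input being the Zariski density of $\ell$-power torsion in a semiabelian variety in characteristic $0$, which may alternatively simply be quoted. The one point requiring care is that $\Phi$ need not be injective on all of $Z_{\mathrm{tors}}$, which is precisely why one restricts attention to torsion of order prime to $\deg\Phi$.
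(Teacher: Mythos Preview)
Your argument is correct and follows the same idea as the paper's one-line proof, which simply notes that every torsion point of $Z$ of order coprime to $\#\ker(\Phi)$ is $\Phi$-periodic and that such points are Zariski dense. You have merely made explicit what the paper leaves implicit, restricting to $\ell$-power torsion for a single prime $\ell\nmid\#\ker(\Phi)$ and spelling out the density; the only small point glossed over is that $n'+2g'=n+2g$ forces $\dim W^0=\dim Z$, which follows once one also uses $n'\le n$ and $g'\le g$ separately (or one simply quotes the density of $\ell$-primary torsion, as you note).
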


\begin{proof}
Indeed, each torsion point of $Z$ of order coprime with $\#\ker(\Phi)$ must be periodic under the action of $\Phi$; hence, there exists a Zariski dense set of $\Phi$-periodic points.
\end{proof}


\subsection{Conclusion of our proof}
\label{subsec:conclusion_2}

We let $\tilde{X}:=X_1\oplus X_2$ and let $\tilde{\phi}$ be the dominant map on $\tilde{X}$ given by $(\phi_1,\phi_2)$. Then we let the isogeny $\iota:X_1\oplus X_2\longrightarrow X$ (see also Equation~\ref{eq:disjoint}) given by 
$$\iota(x_1,x_2)=x_1+x_2.$$
It is immediate to check (see equation~\eqref{eq:disjoint_2}) that we have a commutative diagram, i.e., 
\begin{equation}
\label{eq:commutative_diagram}
\phi\circ \iota = \iota\circ \tilde{\phi}.
\end{equation} 
Therefore, Equation~\eqref{eq:commutative_diagram} yields that for any proper $\tilde{\phi}$-invariant subvariety $\tilde{Z}\subset \tilde{X}$, $\iota(\tilde{Z})$ is a proper $\phi$-invariant subvariety of $X$. 

On the other hand, Proposition~\ref{prop:easy_fact} yields that there exists a Zariski dense set of $\tilde{\phi}$-invariant subvarieties of $\tilde{X}$ of the form $X_1\times S$, where $S\subset X_2$ is a finite set of periodic points under the action of the endomorphism $\phi_2$; note that $X_2$ is positive dimensional (see equation~\eqref{eq:nontrivial_semiabelian}) and so, indeed, $X_1\times S$ is a proper subvariety of $\tilde{X}$. Therefore, the set of proper $\phi$-invariant subvarieties of $X$ is Zariski dense.

This concludes our proof of Theorem~\ref{thm:main_2}.
\subsection{Results for surfaces}
We point out that Conjectures \ref{conj:strong} and \ref{conj:iff} were already known for surfaces, due to work of Cantat \cite{Can}, Diller and Favre \cite{DF}, and Xie \cite{Xie1}.  We give an argument for the sake of completeness, although we stress that these results are well-known to experts.
\begin{theorem} \label{thm:Xie}
Conjectures \ref{conj:strong} and \ref{conj:iff} hold whenever $X$ is a surface.
\end{theorem}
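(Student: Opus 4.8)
The plan is to reduce Conjectures \ref{conj:strong} and \ref{conj:iff} for a surface $X$ to the classification of birational self-maps of surfaces according to their dynamical degree, which is well-developed in the work of Diller--Favre \cite{DF}, Cantat \cite{Can}, and Xie \cite{Xie1}. First I would reduce to the case where $\phi$ is a birational self-map of a \emph{smooth projective} surface, replacing $X$ by a smooth projective model and using that the existence of an invariant fibration and the dynamical degree are birational invariants, while the statement about proper invariant subvarieties (curves and points) transports along the birational map up to a controlled exceptional set. For Conjecture \ref{conj:iff} one must handle all dynamical degrees $\lambda_1(\phi)\geq 1$, so I would split into the cases $\lambda_1(\phi)>1$ and $\lambda_1(\phi)=1$: in the former, invoke the fact (Diller--Favre, Xie \cite[Theorem 1.1]{Xie1}) that a birational surface map of dynamical degree $>1$ has a Zariski-dense set of (isolated) periodic points, hence the union of proper invariant subvarieties is Zariski dense, giving case (II); in the latter, Conjecture \ref{conj:iff} coincides with the dynamical-degree-one case of Conjecture \ref{conj:strong}, which is the content of the next paragraph.

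For a birational self-map $\phi$ of a smooth projective surface with $\lambda_1(\phi)=1$ and no invariant fibration, the plan is to run through the Diller--Favre classification of such maps (see also \cite{DF}), which says that after passing to a suitable model, either $\phi$ is an automorphism isotopic to the identity (equivalently, $\phi$ preserves a fibration or lies in a connected algebraic group) or $\phi$ is an automorphism with quadratic/linear growth of $(\phi^n)^*$ preserving a fibration, or $\phi$ is conjugate to an automorphism of infinite order of a rational or abelian or K3/Enriques surface with trivial action on a relevant cohomology group, and so forth. In all cases where no invariant fibration exists, one uses the structure theory to show that the proper $\phi$-invariant subvarieties (which are either finitely many periodic curves, or contained in a single periodic curve, or points) are contained in a single proper subvariety $Y\subset X$: the periodic curves of a dynamical-degree-one surface automorphism with no invariant fibration form a finite union (this is where the absence of a pencil of invariant curves, which would give an invariant fibration, is used), and the invariant points are isolated and one checks they accumulate only on $Y$ or can be absorbed into $Y$ after enlarging it by a finite set. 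I would cite Xie \cite[Theorem 1.1]{Xie1} for the precise dichotomy and extract condition (ii).

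The main obstacle I anticipate is the bookkeeping in the reduction from the quasiprojective surface $X$ of Conjecture \ref{conj:strong}, or the general rational self-map of Conjecture \ref{conj:iff}, to the clean birational-automorphism-of-a-minimal-model situation in which \cite{DF} and \cite{Xie1} are stated: one must make sure that the finitely many curves blown up or down, the indeterminacy points, and the boundary $X\setminus(\text{projective model})$ can all be swept into the single proper subvariety $Y$ without destroying property (ii), and that invariant subvarieties do not ``escape'' under the birational modification. The point is that all these exceptional loci are proper closed subsets, hence their union with the $Y$ produced on the minimal model is still a proper subvariety of $X$, so (ii) is preserved; making this precise — especially verifying that an invariant subvariety of $X$ maps to an invariant subvariety of the model and is therefore controlled — is the technical heart of the argument, but it is routine given the cited surface results, which is why we only sketch it.
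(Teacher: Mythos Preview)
Your proposal points at the right sources and would eventually work, but it takes a longer road than the paper and carries some unnecessary baggage. The paper's argument is essentially two citations. First, it observes that Conjecture~\ref{conj:iff} implies Conjecture~\ref{conj:strong}: if $\phi$ is birational of dynamical degree $1$ with no invariant fibration, then alternative~(I) of Conjecture~\ref{conj:iff} holds, hence~(II) fails, hence the union of proper invariant subvarieties is not Zariski dense and its closure is the required $Y$. So only Conjecture~\ref{conj:iff} needs proof. Second, for Conjecture~\ref{conj:iff} the paper invokes \cite[Theorem~1.1]{Xie1} directly for the statement that, absent an invariant fibration, \emph{exactly one} of ``$\lambda_1(\phi)=1$'' and ``the periodic points are Zariski dense'' holds, together with Cantat~\cite{Can} for the fact that there are only finitely many $\phi$-periodic curves when no fibration is preserved. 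These two facts finish both directions at once: if $\lambda_1>1$ the periodic points already make the invariant locus dense, and if $\lambda_1=1$ the finitely many periodic curves together with the non-dense set of periodic points sit inside a proper closed subset containing every proper invariant subvariety.

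By contrast, you propose to run the Diller--Favre classification case by case in the $\lambda_1=1$ regime. That is not wrong, but it reproves what Xie's dichotomy and Cantat's finiteness already package; in particular, your control of the isolated periodic points (``one checks they accumulate only on $Y$ or can be absorbed\ldots'') is precisely the content of the ``exactly one'' clause in Xie's theorem, and you should invoke it rather than rederive it through the classification. Two further simplifications: Conjectures~\ref{conj:strong} and~\ref{conj:iff} are stated for smooth projective $X$, so the quasiprojective-to-projective bookkeeping in your third paragraph is unnecessary; and since Conjecture~\ref{conj:iff} implies Conjecture~\ref{conj:strong} as above, there is no need to treat the two conjectures separately.
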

\begin{proof}
It suffices to show Conjecture \ref{conj:iff}  holds. By \cite[Theorem 1.1]{Xie1} if $\phi$ does not preserve a non-constant rational fibration then either the dynamical degree of $\phi$ is one or the union of the periodic points is Zariski dense, and exactly one of these alternatives hold.  Further, a result of Cantat \cite{Can} shows that if $\phi$ does not preserve a non-constant rational fibration then there are at most finitely many $\phi$-periodic curves, and so in the case that the dynamical degree of $\phi$ is one, there is a maximal invariant proper subvariety of $X$, unless $\phi$ preserves a non-constant fibration.   
\end{proof}


\section{Connections with irreducible representations of algebras}
\label{sec:future}
In this section, we explore connections between Conjecture \ref{conj:strong} and representation theoretic questions concerning a class of associative algebras constructed from geometric data.  Much of this is connected with earlier work from \cite{BRS}. 

A classical construction in algebraic geometry is the homogeneous coordinate ring $R$ for a projective variety $X$ over an algebraically closed field $k$.  This ring $R$ is graded by the natural numbers and has the property that one can naturally identify ${\rm Proj}(R)$ with the projective scheme $X$.  In general, the homogeneous coordinate ring is not uniquely defined and depends instead upon some embedding of $X$ into $\mathbb{P}^n$.  More precisely, one fixes an ample invertible sheaf $\mathcal{L}$ and one forms the ring
\begin{equation} R:=\bigoplus_{n\ge 0} H^0(X,\mathcal{L}^{\otimes n}).\end{equation}  

In the early `90s it was observed that certain questions motivated by work in mathematical physics could be approached by considering a twisted version of the above construction \cite{AV, ATV}.  In this setting, one again has a projective variety $X$ and ample invertible sheaf $\mathcal{L}$ but, in addition to this data, one fixes an automorphism $\sigma$ of $X$, which is used to ``twist'' the multiplication of the ring $R$.  Here we take 
$$\mathcal{L}_n:=\mathcal{L}\otimes \sigma^*(\mathcal{L})\otimes \cdots \otimes (\sigma^{n-1})^*(\mathcal{L})$$ for $n\ge 0$, where $(\sigma^i)^*(\mathcal{L})$ is the pullback of $\mathcal{L}$ along $\sigma^i$ and where we take 
$\mathcal{L}_0=\mathcal{O}_X$ and we define
$$B(X,\mathcal{L},\sigma):= \bigoplus_{n\ge 0} H^0(X,\mathcal{L}_n),$$ and we endow this vector space with bilinear multiplication
$$\star: H^0(X,\mathcal{L}_n)\times H^0(X,\mathcal{L}_m)\to H^0(X,\mathcal{L}_{n+m})^*(\mathcal{L})$$ given by $f\star g = f\cdot (\sigma^n)^*(g)$ for
$f\in H^0(X,\mathcal{L}_n)$ and $g\in H^0(X,\mathcal{L}_m)$, where $\cdot$ is the usual bilinear map $H^0(X,\mathcal{E})\times H^0(X,\mathcal{F}) \to H^0(X,\mathcal{E}\otimes \mathcal{F})$ for invertible sheaves $\mathcal{E}$ and $\mathcal{F}$.

Then under this new multiplication, $B(X,\mathcal{L},\sigma)$ becomes an associative algebra, which is called the \emph{twisted homogeneous coordinate ring of} $X$ (with respect to $\sigma$ and $\mathcal{L}$). 

There is a striking dichotomy that arises when one looks at the behavior of these algebras in terms of the automorphism $\sigma$: when $\sigma$ has dynamical degree one, the twisted homogeneous coordinate ring is noetherian and has finite Gelfand-Kirillov dimension (a noncommutative analogue of Krull dimension); and when $\sigma$ has dynamical degree strictly larger than one, the twisted homogeneous coordinate ring is non-noetherian and has exponential growth \cite{Kee}.  The algebraic implication of this dichotomy is that one expects the representation theory of twisted homogeneous coordinate rings to be much nicer than in the case that the automorphism has dynamical degree one.  

One of the most important methods in studying an algebra $A$ is to understand the underlying structure of its irreducible representations (that is, the simple left $A$-modules). In practice, it is often very difficult to obtain an explicit description of these representations and so one often settles instead for a coarser understanding by characterizing the annihilators of simple modules.

These annihilator ideals of simple modules of an algebra are called the \emph{primitive} ideals and they form a distinguished subset of the prime spectrum of the algebra. Due to their structure theoretic importance, their study enjoys a long history.  The first major achievement in this direction was the work of Dixmier \cite{Dix} and Moeglin \cite{Moe}, which shows that the primitive ideals of an enveloping algebra of a finite-dimensional complex Lie algebra can be characterized in both topological and algebraic terms.
\begin{theorem} \em{(}Dixmier-Moeglin \cite{Dix,Moe}{\rm )} Let $L$ be a finite-dimensional complex Lie algebra and let $U(L)$ be its enveloping algebra.  Then for a prime ideal $P$ of $U(L)$ the following are equivalent:
\begin{enumerate}
\item $P$ is primitive;
\item $\{P\}$ is an open subset of its closure in ${\rm Spec}(U(L))$, where we endow the prime spectrum with the Zariski topology;
\item $U(L)/P$ has a simple Artinian ring of fractions whose centre is the base field $\mathbb{C}$.
\end{enumerate}
\end{theorem}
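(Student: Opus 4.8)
The plan is to prove the three conditions equivalent by establishing the cycle $(2)\Rightarrow(1)\Rightarrow(3)\Rightarrow(2)$. Two of these implications are ``soft'': they use only that $U(L)$ is a Noetherian $\mathbb{C}$-algebra enjoying two standard properties, namely that it is a Jacobson ring (every prime ideal is an intersection of primitive ideals) and that it satisfies Quillen's lemma (the endomorphism ring of every simple $U(L)$-module is a division ring algebraic over $\mathbb{C}$, hence equal to $\mathbb{C}$). The real content of the theorem is the remaining implication $(3)\Rightarrow(2)$, which I expect to be the main obstacle.

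For $(2)\Rightarrow(1)$ I would argue by contradiction: suppose $P$ is locally closed but not primitive, and replace $U(L)$ by $A:=U(L)/P$, so that $A$ is prime Noetherian, $\{(0)\}$ is locally closed in $\mathrm{Spec}\,A$, and $(0)$ is not primitive. Local closedness produces a nonzero ideal $I$ of $A$ contained in every nonzero prime; since $(0)$ is not primitive, every primitive ideal of $A$ is nonzero and hence contains $I$; but $A$ is Jacobson, so the intersection of all its primitive ideals is $(0)$, forcing $I=(0)$---a contradiction. For $(1)\Rightarrow(3)$: given a primitive $P$, set $A:=U(L)/P$ and let $M$ be a faithful simple $A$-module; Quillen's lemma gives $\mathrm{End}_A(M)=\mathbb{C}$. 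As $A$ is prime Noetherian, the centre of its Goldie ring of fractions equals the extended centroid $C(A)$, and any element of $C(A)$, being represented by an $A$-bimodule map from an essential ideal of $A$ into $A$, induces an element of $\mathrm{End}_A(M)=\mathbb{C}$; thus $C(A)=\mathbb{C}$ and $P$ is rational.

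The hard direction is $(3)\Rightarrow(2)$, that every rational prime of $U(L)$ is locally closed. I would first treat the solvable case, which is Dixmier's theorem: after passing to an algebraic hull one may assume $L$ is algebraic solvable, the Dixmier map $f\mapsto I(f)$ is then a homeomorphism from the space $L^{*}/G$ of coadjoint orbits onto $\mathrm{Prim}\,U(L)$, and in this case every rational prime lies in its image. Coadjoint orbits, being orbits of an algebraic group acting on an affine space, are locally closed by Chevalley's theorem, so each rational prime is locally closed in $\mathrm{Prim}\,U(L)$; and since $U(L)$ is Jacobson, every prime strictly containing $P$ is an intersection of primitive ideals each of which strictly contains $P$, which upgrades local closedness in $\mathrm{Prim}\,U(L)$ to local closedness in $\mathrm{Spec}\,U(L)$. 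For the semisimple case I would instead exploit the finite generation of the centre $Z(U(L))$ (Harish-Chandra) together with Duflo's theorem that primitive ideals are annihilators of simple highest weight modules: these together show that only finitely many primes contain a given primitive ideal with a fixed central character, whence (a finite intersection of nonzero ideals in a prime ring being nonzero) such an ideal is locally closed. The general case is then reduced to these two via a Levi decomposition $L=\mathfrak{r}\rtimes\mathfrak{s}$, by analysing how the rational/primitive ideals of $U(L)$ sit over those of $U(\mathfrak{r})$; this Mackey-type analysis is the technical heart of \cite{Moe}.

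I expect the Levi-decomposition step to be the genuine difficulty. What is needed throughout is enough control over the inclusion order on $\mathrm{Prim}\,U(L)$ to exhibit, for each rational $P$, an ideal strictly above $P$ lying below every prime strictly containing $P$; for solvable $L$ this is exactly the local closedness of coadjoint orbits, but for Lie algebras that are neither solvable nor reductive the structure of $\mathrm{Prim}\,U(L)$ relative to the radical is substantially more subtle, and it is there that the bulk of the argument in \cite{Dix,Moe} lies.
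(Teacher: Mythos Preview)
The paper does not prove this theorem: it is quoted as a classical result of Dixmier and Moeglin, with citations to \cite{Dix,Moe}, and serves only as motivation for the notion of the Dixmier--Moeglin equivalence used later in Section~\ref{sec:future}. There is therefore no proof in the paper to compare your proposal against.

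That said, your outline is a faithful sketch of the classical argument. The two ``soft'' implications $(2)\Rightarrow(1)$ and $(1)\Rightarrow(3)$ are exactly as you describe, resting on the Jacobson property and the Nullstellensatz/Quillen lemma for $U(L)$; these hold in any countably generated Noetherian algebra over $\mathbb{C}$. The substantive implication $(3)\Rightarrow(2)$ is indeed Moeglin's contribution, and your division into the solvable case (via the Dixmier map and local closedness of coadjoint orbits), the semisimple case (via central characters and Duflo's theorem), and the reduction through a Levi decomposition correctly identifies where the work lies. If anything, you slightly understate the depth of the semisimple case---the finiteness of primitives with a given central character is not quite enough on its own; one needs the finer structure of the primitive spectrum coming from Harish-Chandra bimodules---but as a plan your proposal is sound. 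It simply goes well beyond what the present paper attempts, since the authors invoke the result rather than reprove it.
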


There is a theory of noncommutative localization due to Goldie \cite[Chapt. 2]{MR}, which gives that if $P$ is a prime ideal of a noetherian $k$-algebra $A$ then $A/P$ has a ring of fractions, which we denote ${\rm Frac}(A/P)$, and which is a generalization of the field of fractions construction for commutative integral domains.  This ring of quotients is of course not a field in general, but it is simple Artinian and hence isomorphic to a matrix ring over a division $k$-algebra.  In particular, its centre is a field extension of $k$.  The third condition in the list of equivalent conditions given by Dixmier and Moeglin then says that primitivity of $P$ is in some sense equivalent to $U(L)/P$ being as ``noncommutative as possible'' in the sense of having a ring of fractions whose centre is as small as possible.

In general, given a noetherian algebra $A$ over an algebraically closed field $k$, we say that a prime ideal $P$ is \emph{rational} if ${\rm Frac}(A/P)$ has centre $k$; and we say that $P$ is \emph{locally closed} if $\{P\}$ is an open subset of its closure in ${\rm Spec}(A)$.  In honour of the work of Dixmier and Moeglin, a $k$-algebra $A$ for which the properties of being primitive, locally closed, and rational are equivalent for all primes $P\in {\rm Spec}(A)$ is said to satisfy the \emph{Dixmier-Moeglin equivalence}.  

The Dixmier-Moeglin equivalence is now known to hold for a large class of noetherian algebras, including many natural classes of quantum algebras and Hopf algebras \cite{GL, BG}.  In general, the Dixmier-Moeglin equivalence holds for most known examples of noetherian algebras of finite Gelfand-Kirillov dimension; there are exceptions, but they are somewhat rare and tend to be difficult to construct (see, for example, \cite{BLLM}).  

It has been conjectured that the Dixmier-Moeglin equivalence holds for noetherian twisted homogeneous coordinate rings $B(X,\mathcal{L},\sigma)$ \cite[Conjecture 8.5]{BRS}.  The noetherian property is equivalent to the automorphism $\sigma$ having dynamical degree one \cite{Kee}; this conjecture has been established when ${\rm dim}(X)\le 2$.    

In this setting, one can give a purely geometric characterization of the properties of being primitive, rational, and locally closed in terms of $\sigma$-periodic irreducible subvarieties of $X$.

\begin{proposition} \em{(}\cite[Theorem 8.1(1)]{BRS}\rm{)} Let $X$ be a complex irreducible projective variety, let $\mathcal{L}$ be an ample invertible sheaf, and let $\sigma\in {\rm Aut}_{\mathbb{C}}(X)$.  Then $B(X,\mathcal{L},\sigma)$ satisfies the Dixmier-Moeglin equivalence if $\sigma$ has dynamical degree one and for every subvariety $\sigma$-invariant subvariety $Y$ of $X$, the union of the $\sigma$-invariant proper subvarieties of $Y$ is a Zariski closed subset of $Y$.  \label{prop:DM}
\end{proposition}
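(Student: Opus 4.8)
The plan is to reduce the Dixmier-Moeglin equivalence for $B:=B(X,\mathcal{L},\sigma)$ to a statement about $\sigma$-invariant subvarieties, and then to feed in the dynamical hypothesis through the known (uncountable-field) case of the Zariski dense orbit conjecture. Since $\sigma$ has dynamical degree one, $\mathcal{L}$ is $\sigma$-ample and $B$ is noetherian \cite{Kee}; $B$ is also a Jacobson ring satisfying the Nullstellensatz over $\mathbb{C}$, so for every prime $P$ the implications ``$P$ locally closed $\Rightarrow P$ primitive'' (because the Jacobson radical of $B/P$ is $0$) and ``$P$ primitive $\Rightarrow P$ rational'' (because $\mathrm{End}$ of a simple $B/P$-module is $\mathbb{C}$) hold automatically. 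Moreover, for any prime $P$ the closure $\overline{\{P\}}$ in $\mathrm{Spec}(B)$ is homeomorphic to $\mathrm{Spec}(B/P)$ with $P\leftrightarrow(0)$, and all three properties are intrinsic to $B/P$; hence it suffices to show that for each prime $P$, if the zero ideal of $B/P$ is rational then it is locally closed.

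Next I would invoke the structure theory of noetherian twisted homogeneous coordinate rings: up to replacing $B/P$ by a Veronese subring and a harmless grading shift, $B/P$ is itself a twisted homogeneous coordinate ring $B(Y,\mathcal{M},\tau)$ of an irreducible $\tau$-invariant projective variety $Y\subseteq X$, where $\tau$ is the restriction to $Y$ of a suitable iterate of $\sigma$ and $\mathcal{M}$ is a $\tau$-ample sheaf on $Y$. If $\tau$ has finite order, $B/P$ is module-finite over an affine commutative centre, so the equivalence at its zero ideal follows from the (classical) Dixmier-Moeglin equivalence for affine PI algebras, and there is nothing to prove. So assume $\tau$ has infinite order. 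Computing in the graded quotient ring $\mathbb{C}(Y)[t^{\pm1};\tau]$ --- which is simple, as $\tau$ has infinite order --- the centre of $\mathrm{Frac}(B/P)$ is the field $\mathbb{C}(Y)^{\tau}$ of $\tau$-invariant rational functions, so the zero ideal of $B/P$ is rational if and only if $\tau$ preserves no nonconstant rational function on $Y$. On the other hand, simplicity of the graded quotient ring forces every nonzero prime of $B/P$ to have nonzero graded core, so $\overline{\{(0)\}}\setminus\{(0)\}$ is closed precisely when some nonzero homogeneous element of $B/P$ lies in every nonzero graded prime; by $\tau$-ampleness of $\mathcal{M}$ this happens exactly when the union $W$ of all proper $\tau$-invariant subvarieties of $Y$ is not Zariski dense in $Y$. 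Thus the zero ideal of $B/P$ is locally closed if and only if $\overline{W}\subsetneq Y$.

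It remains to prove: if $\tau$ preserves no nonconstant rational function on $Y$, then $\overline{W}\subsetneq Y$. This is where the hypothesis of the Proposition enters: applied to the $\sigma$-invariant subvariety $Y$ (and transported through the iterate and the grading shift) it gives that $W$ is already Zariski closed, so $\overline{W}=W$ and it suffices to rule out $W=Y$. Suppose $W=Y$. Then every point of $Y$ lies in a proper $\tau$-invariant subvariety, hence no point of $Y$ has a $\tau$-orbit that is Zariski dense in $Y$. Since $K=\mathbb{C}$ is uncountable, the Zariski dense orbit conjecture holds for $(Y,\tau)$ (see \cite{A-C, BGZ}); as alternative~(II) of Conjecture~\ref{conj:original} fails, alternative~(I) must hold, i.e.\ $\tau$ preserves a nonconstant rational function on $Y$, a contradiction. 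Hence $\overline{W}=W\subsetneq Y$, the zero ideal of $B/P$ is locally closed, and combining this with the two automatic implications we conclude that primitivity, rationality and local closedness are equivalent at $P$. Running this over all $P\in\mathrm{Spec}(B)$ yields the Dixmier-Moeglin equivalence for $B$.

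I expect the main obstacle to be the structure-theoretic translation of the second paragraph: identifying $B/P$ with a twisted homogeneous coordinate ring of an irreducible invariant subvariety while dealing with possibly reducible, cyclically permuted supports, grading shifts, and the passage between $B$ and its Veronese subrings; verifying in the infinite-order case that every nonzero prime of $B/P$ has nonzero graded core; and setting up the dictionary between the closed set $\overline{\{(0)\}}\setminus\{(0)\}$ of $\mathrm{Spec}(B/P)$ and the proper $\tau$-invariant subvarieties of $Y$. Once that dictionary is in place, the dynamical content is just a short appeal to the uncountable case of the Zariski dense orbit conjecture, which becomes available precisely because the hypothesis rigidifies the ``bad locus'' $W$ into a Zariski closed set.
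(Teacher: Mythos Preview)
The paper does not prove Proposition~\ref{prop:DM}; it is quoted verbatim from \cite[Theorem~8.1(1)]{BRS} and then used as a black box in the proof of Proposition~\ref{prop:PR} and its corollary. There is therefore no proof in the present paper to compare your attempt against.

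For what it is worth, your outline matches the strategy carried out in \cite{BRS}. The two easy implications (locally closed $\Rightarrow$ primitive $\Rightarrow$ rational) hold because $B$ is an affine noetherian $\mathbb{C}$-algebra; prime quotients of $B$ are identified, up to Veronese subrings and the passage from a $\sigma$-cycle of components to a single $\sigma^r$-invariant component, with twisted homogeneous coordinate rings $B(Y,\mathcal{M},\tau)$ of irreducible $\sigma$-periodic subvarieties; rationality of $(0)$ becomes $\mathbb{C}(Y)^{\tau}=\mathbb{C}$; and local closedness of $(0)$ becomes the condition that the union $W$ of proper $\tau$-invariant subvarieties of $Y$ is not Zariski dense. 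Your final step---using the hypothesis to get $W$ closed and then invoking the uncountable-field case of Conjecture~\ref{conj:original} (\cite{A-C,BGZ}) to rule out $W=Y$ when $\tau$ preserves no fibration---is exactly how the argument is closed off over $\mathbb{C}$. The technical issues you flag at the end (cyclically permuted reducible supports, Veronese passage, showing every nonzero prime of $B/P$ has nonzero graded core, and the dictionary between nonzero graded primes and proper $\tau$-invariant closed subsets) are indeed where the genuine work in \cite{BRS} lies, so your self-assessment is accurate.
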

In particular, applying Theorems \ref{thm:main} and \ref{thm:main_2} and using the criterion in Proposition \ref{prop:DM}, we can deduce that the Dixmier-Moeglin equivalence holds for noetherian twisted homogeneous coordinate rings of abelian varieties.  We prove a more general result for split semiabelian varieties. We recall that a semiabelian variety over an algebraically closed field is \emph{split} if is isogenous to a direct product of an abelian variety and a power of the multiplicative group.    
\begin{proposition} 
\label{prop:4.3}
Let $X$ be a split semiabelian variety over an algebraically closed field of characteristic zero, let $\Phi$ be an algebraic group  automorphism of $X$, let $a\in X$, and let $\sigma: X\to X$ be the map $\sigma(x)=\Phi(x)+a$. Then:
\begin{enumerate}
\item if $\sigma$ has dynamical degree $>1$ then there is a $\sigma$-invariant subvariety $Y$ of $X$ with the property that the union of the $\sigma$-invariant proper subvarieties of $Y$ is a Zariski dense, proper subset of $Y$;
\item if $\sigma$ has dynamical degree $1$ then every $\sigma$-invariant subvariety $Y$ of $X$ has the property that the union of the $\sigma$-invariant proper subvarieties of $Y$ is a Zariski closed subset of $Y$. 
\end{enumerate}\label{prop:PR}
\end{proposition}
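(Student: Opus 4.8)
The plan is to deduce both parts from Theorems~\ref{thm:main} and~\ref{thm:main_2}, using in addition the structure theory of invariant subvarieties of semiabelian varieties (Theorem~3.1 of \cite{P-R} and the resulting description of invariant fibrations recalled in Section~\ref{subsec:strategy}) and the Zariski dense orbit theorem for semiabelian varieties \cite{G-Matt}. Write $\sigma=T_a\circ\Phi$; since $\Phi$ is a group automorphism, $\sigma$ is an automorphism of the variety $X$, and by Proposition~\ref{prop:fact} its dynamical degree equals the spectral radius of $\Phi$ (the largest modulus of a root of the minimal polynomial $P$ of $\Phi$). Throughout I will use that replacing $\sigma$ by an iterate alters neither the set of points lying in some proper invariant subvariety nor, for an \emph{irreducible} invariant $Y$, the set $U(Y):=\bigcup\{Z\subsetneq Y:\sigma(Z)=Z\}$, because the $\sigma$-orbit of a proper irreducible invariant subvariety is again proper and invariant; and that, after passing to components and a suitable iterate, it suffices to treat irreducible invariant $Y$. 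I will also use the standard fact that a connected algebraic subgroup of a split semiabelian variety is again split semiabelian, which makes both statements amenable to induction on $\dim X$.

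For Part~(2) I would first replace $\sigma$ by an iterate so that $\Phi$ is unipotent; then $\lambda_1(\sigma)=1$ and Theorem~\ref{thm:main} gives two cases. If $\sigma$ has no proper invariant subvariety, the only invariant $Y$ is $X$ itself and $U(X)=\varnothing$ is closed. If $\sigma$ preserves a non-constant fibration, I fix a proper $\Phi$-invariant connected subgroup $H\subsetneq X$ on which $\sigma$ induces an automorphism of $X/H$ of finite order; unipotence forces $\bar\Phi=\mathrm{id}_{X/H}$, so $\bar\sigma$ is translation by a torsion point, of order $d$ say. Given an irreducible invariant $Y$ with image $\bar Y\subseteq X/H$: if $\dim\bar Y\ge1$, then for each $y\in Y$ the set $\pi_H^{-1}\bigl(\{\bar y,\bar\sigma\bar y,\dots,\bar\sigma^{d-1}\bar y\}\bigr)\cap Y$ is an invariant subvariety of $Y$ containing $y$ whose image in $X/H$ is a proper (finite) subset of $\bar Y$, hence a proper subvariety of $Y$; so $U(Y)=Y$, which is closed. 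If $\bar Y$ is a point, it is $\bar\sigma$-fixed, $Y$ lies in a $\sigma$-invariant coset of $H$ on which $\sigma$ acts as a translation composed with the unipotent automorphism $\Phi|_H$, and the claim for $Y$ follows from the inductive hypothesis applied to $H$, which has smaller dimension.

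For Part~(1) I would replace $\sigma$ by an iterate so that every root of $P$ is either $1$ or not a root of unity, and prove by induction on $\dim X$ the following sharpening: there exist an irreducible positive-dimensional $Y_0\subseteq X$ and $\ell\in\N$ with $\sigma^\ell(Y_0)=Y_0$ such that $\bigcup\{Z\subsetneq Y_0:\sigma^\ell(Z)=Z\}$ is dense and proper in $Y_0$. (Given this, $Y:=Y_0\cup\sigma(Y_0)\cup\dots\cup\sigma^{\ell-1}(Y_0)$ is $\sigma$-invariant; since the components of $Y$ are the $\sigma^i(Y_0)$, a point of $Y_0$ witnessing properness there also witnesses it in $Y$, while the $\sigma$-saturations of the proper $\sigma^\ell$-invariant subvarieties of $Y_0$ form a dense family of proper $\sigma$-invariant subvarieties of $Y$, so $U(Y)$ is dense and proper in $Y$, giving~(1).) For the induction: if $\sigma$ preserves no non-constant rational function, then by \cite{G-Matt} $\sigma$ has a Zariski dense orbit, so the union of the proper invariant subvarieties of $X$ is a proper subset, while by Theorem~\ref{thm:main_2} it is Zariski dense; thus $(Y_0,\ell)=(X,1)$ works. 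Otherwise I fix a proper $\Phi$-invariant connected subgroup $H\subsetneq X$ on which $\sigma$ induces an automorphism of finite order $d$ of $X/H$; again $\bar\Phi=\mathrm{id}_{X/H}$, so $X/H$ carries only the eigenvalue $1$, whereas $\lambda_1(\sigma)>1$ produces, via Kronecker's theorem, a root of $P$ of modulus $>1$; that root is then an eigenvalue of $\Phi|_H$, whence $\lambda_1(\Phi|_H)>1$. Since $\bar\sigma$ has order $d$, the subgroup $H$ is $\sigma^d$-invariant and $\sigma^d|_H$ is a translation composed with the group automorphism $\Phi^d|_H$ of the split semiabelian variety $H$, of dynamical degree $>1$; as $\dim H<\dim X$, the inductive hypothesis supplies $(Y_0,\ell_0)$ inside $H$ with $\sigma^{d\ell_0}(Y_0)=Y_0$, and I take $\ell:=d\ell_0$. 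The recursion terminates since the dimension strictly drops whenever this second alternative occurs.

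I expect the difficulty here to be organisational rather than conceptual. The two delicate points are: the bookkeeping of the auxiliary iterates introduced at each stage of the induction in Part~(1), where one must check that the subvariety ultimately produced is genuinely $\sigma$-invariant and not merely invariant under an iterate — this is exactly what the component-permutation step handles — and the verification that $\lambda_1(\Phi|_H)>1$, i.e. that the ``hyperbolic'' part of the spectrum of $\Phi$ is carried by the subgroup $H$ over whose quotient $\sigma$ acts with finite order. It is this last point, together with the split hypothesis (so that $H$ is again split and Theorem~\ref{thm:main_2} applies to it), that forces the dense-but-proper family of invariant subvarieties inside $Y_0$.
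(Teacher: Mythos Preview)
Your argument is correct and follows the same inductive strategy as the paper: induction on $\dim X$, with the no-fibration case handled directly by Theorems~\ref{thm:main}, \ref{thm:main_2}, and \cite{G-Matt}, and the fibration case reduced to a proper $\Phi$-invariant connected subgroup of smaller dimension. The paper obtains this subgroup (called $E$ there, with $\pi\circ\sigma=\pi$ outright) by invoking \cite[Theorem~1.2]{G-Sina} rather than your route via finite order on the quotient plus the eigenvalue normalization, and is less explicit than you are about the iterate bookkeeping in Part~(1); but the substance is the same.
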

\begin{proof} We prove this by induction on the dimension of $X$.  When ${\rm dim}(X)=0$, there is nothing to prove. Thus we assume that (1) and (2) hold whenever ${\rm dim}(X)<d$ with $d\ge 1$ and consider the case when ${\rm dim}(X)=d$.  

By Theorems \ref{thm:main} and \ref{thm:main_2}, we obtain both (1) and (2) if $\sigma$ does not preserve a non-constant fibration.  Indeed, Theorem~\ref{thm:main} yields that if $\phi$ has dynamical degree $1$, then there exist no proper $\sigma$-invariant subvarieties of $X$ and so, conclusion~(2) holds trivialy. On the other hand, if the dynamical degree of $\phi$ is larger than $1$ then the assumption that $\sigma$ does not preserve a non-constant fibration yields (according to \cite{G-Matt}) that there exists a point $x\in X$ with a Zariski dense orbit; therefore, $x$ would not be contained in a proper $\sigma$-invariant subvariety $Z\subset X$. However, Theorem~\ref{thm:main_2} yields that the union of all proper $\sigma$-invariant subvarieties of $X$ would still be Zariski dense in $X$; thus, conclusion~(1) in Proposition~\ref{prop:4.3} holds for $X$ itself.

Therefore, from now on, we may assume that $\sigma$ preserves a non-constant fibration.  

Since a variety is $\sigma$-periodic if and only if it is $\sigma^r$-periodic, we may replace $\sigma$ by $\sigma^r$. So, letting $P(x)$ be the monic, minimal polynomial for the algebraic group automorphism $\Phi$, at the expense of replacing $\Phi$ by $\Phi^r$ (and thus, replacing $\sigma$ by $\sigma^r$), we may assume each root of $P(x)$ is either equal to $1$, or it is not a root of unity.

By \cite[Theorem 1.2]{G-Sina}, since $\sigma$ preserves a non-constant fibration, there exists a non-constant group endomorphism $\Psi:X \to X$  such that $\Psi \circ (\Phi - {\rm Id})$ is $0$ in the endomorphism ring of $X$, and furthermore $\Psi\circ \sigma = \Psi$ (i.e., $a\in \ker(\Psi)$).


Let $E$ denote the connected component (of the identity) of the kernel of $\Psi$; since $\Psi$ is non-constant, then $E\ne X$ and so, $E$ is a split semiabelian subvariety of dimension $<d$. We let $\pi$ be the quotient homomorphism $\pi: X\to X/E$, then $\pi\circ \sigma =\pi$.  Then $\sigma|_{E}$ is an automorphism of $E$ and if the dynamical degree of $\sigma$ is strictly larger than one, then the dynamical degree of $\sigma|_E$ is also strictly larger than one by Proposition \ref{prop:fact} and the fact that the minimal polynomial of $\Psi|_{E}$ has all the roots of the minimal polynomial of $\Psi$, except, possibly, the root equal to $1$.  Hence by the induction hypothesis we obtain conclusion~(1) in Proposition~\ref{prop:4.3}.

Now, we are left to prove conclusion~(2) in Proposition~\ref{prop:4.3}. So,  the dynamical degree of $\sigma$ is one and suppose towards a contradiction that there is some $\sigma$-invariant subvariety $Y$ of $X$ such that the union of the proper $\sigma$-invariant subvarieties of $Y$ is not a  Zariski closed subset of $Y$.  Then since $\sigma$ permutes the irreducible components of $Y$, after replacing $\sigma$ by a suitable iterate and taking a suitable irreducible component of $Y$, we may assume without loss of generality that $Y$ is irreducible.  

Now, if  $\pi(Y)$ is a point, then (at the expense of replacing $\sigma$ by a conjugate of it by a suitable translation), we may assume that $Y\subseteq E$.  By Proposition \ref{prop:fact}, $\sigma|_{E}$ has also dynamical degree one and so by the induction hypothesis the union of the proper invariant subvarieties of $Y$ is Zariski closed, as desired in part~(2) of Proposition~\ref{prop:4.3}.

Therefore, the remaining case is when $\pi(Y)$ is a positive dimensional subvariety of $X/E$. But then $\sigma|_Y:Y\longrightarrow Y$ preserves a non-constant fibration and so, the union of the proper $\sigma$-invariant subvarieties of $Y$ equals the entire $Y$, as desired once again in part~(2) of Proposition~\ref{prop:4.3}. 

This concludes our proof of Proposition~\ref{prop:4.3}.
\end{proof} 

Proposition~\ref{prop:4.3} yields the following corollary.

\begin{corollary} Let $X$ be a complex abelian variety, let $\mathcal{L}$ be an ample invertible sheaf, and let $\sigma\in {\rm Aut}_{\mathbb{C}}(X)$. If $B(X,\mathcal{L},\sigma)$ is noetherian then it satisfies the Dixmier-Moeglin equivalence.  
\end{corollary}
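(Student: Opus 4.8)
The plan is to assemble three ingredients that are already in place: the noetherianity criterion of Keeler, the structure theorem for automorphisms of abelian varieties, and Propositions~\ref{prop:4.3} and \ref{prop:DM}.

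First I would invoke the dichotomy of \cite{Kee}: the twisted homogeneous coordinate ring $B(X,\mathcal{L},\sigma)$ is noetherian precisely when $\sigma$ has dynamical degree one. Hence the hypothesis that $B(X,\mathcal{L},\sigma)$ be noetherian is equivalent to the assumption that $\lambda_1(\sigma)=1$, which places us squarely in the situation of part~(2) of Proposition~\ref{prop:4.3}.

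Next I would record that $\sigma$ has the shape required by Proposition~\ref{prop:4.3}. By Iitaka's description of the regular self-maps of a semiabelian variety \cite{Iitaka}, applied to the abelian variety $X$ and to the automorphism $\sigma$, we may write $\sigma(x)=\Phi(x)+a$ for an algebraic group automorphism $\Phi$ of $X$ and a point $a\in X(\mathbb{C})$. Moreover, an abelian variety is trivially a split semiabelian variety, being isogenous to the product of itself with the zeroth power of $\mathbb{G}_m$. Thus the hypotheses of Proposition~\ref{prop:4.3} are satisfied, and since $\lambda_1(\sigma)=1$, conclusion~(2) of that proposition tells us that for every $\sigma$-invariant subvariety $Y\subseteq X$ the union of the proper $\sigma$-invariant subvarieties of $Y$ is a Zariski closed subset of $Y$.

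Finally, $X$ is a complex irreducible projective variety, $\mathcal{L}$ is ample, $\sigma$ has dynamical degree one, and the geometric condition demanded by Proposition~\ref{prop:DM} has just been verified; therefore Proposition~\ref{prop:DM} yields that $B(X,\mathcal{L},\sigma)$ satisfies the Dixmier--Moeglin equivalence, which completes the argument. I do not expect a genuine obstacle here beyond the work already carried out: the entire content of the corollary sits inside Proposition~\ref{prop:4.3}(2) and Proposition~\ref{prop:DM}, and the only points that need checking here — that an abelian variety is a split semiabelian variety, and that $\sigma$ decomposes as a translation composed with a group automorphism — are immediate. If anything, the ``hard part'' was already dispatched in the proof of Proposition~\ref{prop:4.3}, whose inductive step reduces, via the invariant fibration produced by \cite{G-Sina}, to lower-dimensional split semiabelian quotients.
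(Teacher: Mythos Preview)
Your proof is correct and follows exactly the same route as the paper's own proof, which simply cites Keeler's theorem to equate the noetherian hypothesis with $\lambda_1(\sigma)=1$ and then invokes Propositions~\ref{prop:DM} and~\ref{prop:PR}. The extra verifications you supply (that $\sigma$ decomposes as translation composed with a group automorphism, and that an abelian variety is split semiabelian) are implicit in the paper's two-line argument.
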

\begin{proof}
By \cite[Theorem 1.3]{Kee}, $B(X,\mathcal{L},\sigma)$ is noetherian if and only if $\sigma$ has dynamical degree one.  The result now follows from Propositions \ref{prop:DM} and \ref{prop:PR}.  \end{proof}
We note that in an earlier paper \cite{Advances}, we considered other dynamical questions for endomorphisms of semiabelian varieties and used a similar translation of dynamical results to obtain topological and algebraic characterizations of primitive ideals in skew polynomial extensions of $\mathbb{C}[x_1^{\pm 1},\ldots ,x_d^{\pm 1}]$ (see \cite[Theorem 1.1]{Advances}).  This class of algebras shares some commonalities with twisted homogeneous coordinate rings of abelian varieties in that they are both constructed from a semiabelian variety along with an automorphism of this variety, although in the latter case the ambient variety is projective while in the former case it is affine.
\section*{Acknowledgments} We thank Fei Hu for many helpful comments.


\begin{thebibliography}{DGHLS22}


\bibitem[AC08]{A-C}
E.~Amerik and F.~Campana, \emph{Fibrations m\'{e}romorphes sur certaines vari\'{e}t\'{e}s \`{a} fibr\'{e} canonique trivial} Pure Appl. Math. Q. \textbf{4} (2008), no.~2, Special Issue: In honor of Fedor Bogomolov. Part 1, 509--545.





\bibitem[ATV90]{ATV} M. Artin, J. Tate, and M. Van den Bergh, \emph{Some algebras associated to automorphisms of elliptic curves}, in The Grothendieck Festschrift, Vol. I, Birkh\"auser Boston, Boston, MA, 1990, pp. 33--85.

\bibitem[AV90]{AV} M. Artin and M. Van den Bergh, \emph{Twisted homogeneous coordinate rings}, J. Algebra \textbf{133} (1990), 249--271.

\bibitem[BG19]{Advances}
J. Bell and D. Ghioca, \emph{Periodic subvarieties of semiabelian varieties and annihilators of irreducible representations}, Adv. Math. \textbf{349} (2019),   459--487.


\bibitem[BGRS17]{BGSZ}
J.~P.~Bell, D.~Ghioca, Z.~Reichstein, and M.~Satriano, \emph{On the Medvedev-Scanlon conjecture for minimal threefolds of non-negative Kodaira dimension}, New York J. Math. \textbf{23} (2017), 1185--1203.


\bibitem[BGR17]{BGZ}
J.~P.~Bell, D.~Ghioca and Z.~Reichstein, \emph{On a dynamical version of a theorem of Rosenlicht}, Ann. Sc. Norm. Super. Pisa Cl. Sci. (5) \textbf{17} (2017), no. 1, 187--204.


\bibitem[BLLM17]{BLLM} J. Bell, S. Launois, O. Le\'on S\'anchez, and R. Moosa,
\emph{Poisson algebras via model theory and differential-algebraic geometry,} J. Eur. Math. Soc. (JEMS) {\bf 19} (2017), no. 7, 2019--2049.



\bibitem[BRS10]{BRS} J. P. Bell, D. Rogalski, and S. J. Sierra, 
\emph{The Dixmier-Moeglin equivalence for twisted homogeneous coordinate rings},
Israel J. Math. {\bf 180} (2010), 461--507.

\bibitem[BG02]{BG} K. A. Brown and K. R. Goodearl, \emph{Lectures on Algebraic Quantum Groups}, Advanced Courses in Mathematics, CRM Barcelona, Birkh\"auser Verlag, Basel, 2002.

\bibitem[Can10]{Can} S. Cantat, \emph{Invariant hypersurfaces in holomorphic dynamics}, Math. Res. Lett. {\bf 17} (2010), no. 5, 833--841.

\bibitem[CGSZ21]{CGSZ}
P.~Corvaja, D.~Ghioca, T.~Scanlon, and U.~Zannier, \emph{The Dynamical Mordell-Lang Conjecture for endomorphisms of semiabelian varieties defined over fields of positive characteristic}, J. Inst. Math. Jussieu \textbf{20} (2021), no.~2, 669--698.

\bibitem[DGHLS22]{Annali}
N.-B.~Dang, D.~Ghioca, F.~Hu, J.~Lesieutre, and M.~Satriano, \emph{Higher arithmetic degrees of dominant rational self-maps}, Ann. Sc. Norm. Super. Pisa  Cl. Sci. (5) \textbf{23} (2022), 465--483.

\bibitem[DF01]{DF} J. Diller and C. Favre, \emph{Dynamics of bimeromorphic maps of surfaces}, Amer. J. Math. {\bf 123} (2001), 1135--1169.
\bibitem[Dix77]{Dix} J. Dixmier, \emph{Id\'eaux primitifs dans les alg\`ebres enveloppantes}, J. Algebra {\bf 48} (1977), 96--112. 


\bibitem[GH18]{G-H}
D.~Ghioca and F.~Hu, \emph{Density of orbits of endomorphisms of commutative linear algebraic groups}, New York J. Math. \textbf{24} (2018), 375--388.

\bibitem[GS]{G-Sina}
D.~Ghioca and S.~Saleh, \emph{Zariski dense orbits for regular self-maps on split semiabelian varieties}, Canad. Math. Bull., 2021, 7 pp., to appear.

\bibitem[GS21]{G-Sina-20}
D.~Ghioca and S.~Saleh, \emph{Zariski dense orbits for regular self-maps of tori in positive characteristic}, New York J. Math. \textbf{27} (2021), 1274--1304. 

\bibitem[GS19]{G-Matt}
D.~Ghioca and M.~Satriano, \emph{Density of orbits of dominant regular self-maps of semiabelian varieties}, Trans. Amer. Math. Soc. \textbf{371} (2019), no.~9, 6341--6358.

\bibitem[GS17]{G-S}
D.~Ghioca and T.~Scanlon, \emph{Density of orbits of endomorphisms of abelian varieties}, Trans. Amer. Math. Soc. \textbf{369} (2017), no.~1, 447--466.

\bibitem[GX18]{G-X}
D.~Ghioca and J.~Xie, \emph{Algebraic dynamics of skew-linear self-maps}, Proc. Amer. Math. Soc. \textbf{146} (2018), no.~10, 4369--4387.

\bibitem[GL00]{GL} K. R. Goodearl and E. S. Letzter, E. S, \emph{The Dixmier–Moeglin equivalence in quantum coordinate rings and quantized Weyl algebras}, Trans. Amer. Math. Soc. {\bf 352} (2000), 1381--1403.

\bibitem[Har95]{Harris} J. Harris, \emph{Algebraic geometry. A first course,} Graduate Texts in Mathematics, 133. Springer-Verlag, New York, 1995.

\bibitem[Iit76]{Iitaka} S.~Iitaka, \emph{Logarithmic forms of algebraic varieties}, J. Fac. Sci. Univ. Tokyo Sect. IA Math. \textbf{23} (1976), 525--544.

\bibitem[Kee00]{Kee} D. S. Keeler, \emph{Criteria for $\sigma$-ampleness}, J. Amer. Math. Soc. {\bf 13} (2000), 517--532. 

\bibitem[MS20]{Matsuzawa-Sano}
Y.~Matsuzawa and K.~Sano, \emph{Arithmetic and dynamical degrees of self-morphisms of semi-abelian varieties}, Ergodic Theory Dynam. Systems \textbf{40} (2020), no.~6, 1655--1672.

\bibitem[MR01]{MR} J. C. McConnell and J. C. Robson, \emph{Noncommutative Noetherian Rings}, American Mathematical Society, Providence, RI, 2001.

\bibitem[MS14]{M-S}
A.~Medvedev and T.~Scanlon, \emph{Invariant varieties for polynomial dynamical systems}, Ann. of Math. (2) \textbf{179} (2014), no.~1, 81--177. 

\bibitem[Moe80]{Moe} C. Moeglin, \emph{Id\'eaux bilat\`eres des alg\`ebres enveloppantes}, Bull. Soc. Math. France {\bf 108} (1980), 143--186.


\bibitem[PR04]{P-R}
R.~Pink and D.~Roessler,  \emph{On $\psi$-invariant subvarieties of semiabelian varieties and the Manin-Mumford conjecture}, J. Algebraic Geom. \textbf{13} (2004), no.~4, 771--798.

\bibitem[RRZ06]{Zinovy}
Z.~Reichstein, D.~Rogalski, and J.~J.~Zhang, \emph{Projectively simple rings}, 
Adv. Math. \textbf{203} (2006), no.~2, 365--407.
%

\bibitem[Xie15]{Xie1} J.~Xie, \emph{Periodic points of birational transformations on projective surfaces}, Duke Math. J. {\bf 164} (2015), no. 5, 903--932.

\bibitem[Xie]{X}
J.~Xie, \emph{The existence of Zariski dense orbits for endomorphisms of projective surfaces (with an appendix in collaboration with T. Tucker)},  2019, 66 pp., available online at {\tt https://arxiv.org/pdf/1905.07021.pdf}

\bibitem[Zha06]{Zhang}
S.~Zhang, \emph{Distributions in algebraic dynamics}, In: ``Surveys in Differential Geometry'', Vol. \textbf{X}, Int. Press, Somerville, MA, 2006, 381--430.

\end{thebibliography}
\end{document}